\title{Ovoids in the cyclic presentation of PG(3,q)}
\author{Kanat Abdukhalikov \\	
Department of Mathematical Sciences, \\
UAE University, PO Box 15551, Al Ain, UAE\\
Email: abdukhalik@uaeu.ac.ae \bigskip  \\  
Simeon Ball \\	
Departament de Matem\`atiques, \\
Universitat Polit\`ecnica de Catalunya, 08034 Barcelona, Spain\\ 
Email: simeon.michael.ball@upc.edu\bigskip  \\ 
Duy Ho \\
Department of Mathematics and Statistics, \\
UiT The Arctic University of Norway, Tromsø
9037, Norway\\
Email: duyho92@gmail.com \bigskip \\
Tabriz Popatia \\	
Departament de Matem\`atiques, \\
Universitat Polit\`ecnica de Catalunya, 08034 Barcelona, Spain\\ 
Email: tabriz.popatia@upc.edu }
\date{ }
\begin{document} 

\maketitle

\theoremstyle{plain} 
\newtheorem{lemma}{Lemma}[section] 
\newtheorem{theorem}[lemma]{Theorem}
\newtheorem{corollary}[lemma]{Corollary}
\newtheorem{proposition}[lemma]{Proposition}

\theoremstyle{definition}
\newtheorem{definition}{Definition}[section] 
\newtheorem{remark}{Remark}
\newtheorem{example}{Example}

\newcommand{\eps}{\varepsilon}
\newcommand{\inprod}[1]{\left\langle #1 \right\rangle}
\newcommand{\la}{\lambda} 
\newcommand{\al}{\alpha}
\newcommand{\om}{\omega} 
\newcommand{\gam}{\gamma}
\newcommand{\be}{\beta}
\newcommand{\sig}{\sigma}

\begin{abstract}
We consider the  cyclic presentation of $PG(3,q)$ whose points are in the finite field $\mathbb{F}_{q^4}$ and describe the known ovoids therein.
We revisit the set $\mathcal{O}$, consisting of $(q^2+1)$-th roots of unity in $\mathbb{F}_{q^4}$, and prove that it forms an elliptic quadric within the cyclic presentation of $PG(3,q)$.  Additionally, following the work of Glauberman on Suzuki groups, we offer a new description of Suzuki-Tits ovoids in the cyclic presentation of $PG(3,q)$, characterizing them as the zeroes of a  polynomial over $\mathbb{F}_{q^4}$. 
\end{abstract}

Keywords:   Finite geometries, Cyclic presentation, Ovoids, Elliptic quadrics, 	Suzuki-Tits ovoids, Projective polynomials

\section{Introduction}

In the projective space $PG(3,q)$ with $q>2$, an \textit{ovoid} is a set of $q^2 + 1$ points meeting every line in at most 2 points. The classical example of an ovoid is an elliptic quadric,  whose points come from a non-degenerate elliptic quadratic form. Ovoids have been an interesting topic in finite geometry with important applications in coding theory as it has been shown (for example in \cite{projectb110,ding2019}) that error-correcting codes from ovoids have optimal parameters.

The only known ovoids in $PG(3,q)$ are the elliptic quadrics, which exist for all $q$, and the Suzuki-Tits ovoids, which exist for $q=2^m$, where $m\ge 3$ is odd. It is well known that every ovoid of $PG(3,q)$, with $q$ odd, is an elliptic quadric  (see for example \cite{hirschfeld1995}). On the other hand, the classification problem for $q$
 even has been resolved only  for $q \le 64$, see \cite{okeefe1994, penttila2022}. 
 For this reason,  our main interest  is the case when $q$ is even.

Usually ovoids are studied in the standard presentation of the projective space $PG(3,q)$. In this paper, we   study ovoids in  another model of  $PG(3,q)$  described   in \cite{ball1999},   following the approach by Jamison \cite{jamison} (see also \cite{bruenfisher}). The point set of this model is identified with a cyclic subgroup of the multiplicative group $\mathbb{F}_{q^4}\backslash\{0\}$. We will refer to this model as \textit{the cyclic presentation} of $PG(3,q)$.

In the first part of the paper, we consider the set $\mathcal{O}$ of $(q^2+1)$-th roots of unity in the finite field $\mathbb{F}_{q^4}$.  In \cite{projectb103}, it was shown that the set $\mathcal{O}$ consists of zeros of a quadratic form and it determines an elliptic quadric in $PG(3,q)$. However, a  presentation of $PG(3,q)$ to which the elliptic quadric $\mathcal{O}$ belongs was not provided. In Theorem \ref{ellipticquadric}, we will show that the set $\mathcal{O}$ is an elliptic quadric in the cyclic presentation of $PG(3,q)$.  
We highlight the recent work on the zeros of the projective polynomial $X^{q+1} + X + a$ by Kim et al.  in \cite{mesnager2021,mesnager2021b}, which provides necessary tools to obtain  our result.

In the remainder of the paper, we  study  Suzuki-Tits ovoids. These ovoids bear this name because they were first described by Tits in \cite{tits1962}  and are stabilized by the Suzuki groups $Sz(q)$. These groups $Sz(q)$, also known as the twisted Chevalley groups of type $^2B_2(q)$, were found by Suzuki \cite{suzuki1960}. A comprehensive  treatment of this construction by Tits can be found in the book by Taylor \cite{taylor1992}. 
We also refer the reader to a recent survey \cite{thas2023} and references therein for more on the historical development and impact of the discovery of Suzuki-Tits ovoids.

We mention briefly the general construction of Suzuki-Tits ovoids in $PG(3,q)$; for the definitions of the terminology, we refer the reader to Subsection 2.3.  Associated with $PG(3,q)$ is a generalized quadrangle $W(q)$. For $q=2^m$, where $m\ge 3$ is odd,
the set of absolute points of a polarity of $W(q)$ forms a Suzuki-Tits ovoid. The standard way to construct a Suzuki-Tits ovoid is to introduce a symplectic basis of the vector space  $\mathbb{F}_{q}^4$ over $\mathbb{F}_q$ and  describe related objects with respect to this basis. 

An alternative approach to the above, which we will use in this paper, is to consider the cyclic presentation of $PG(3,q)$ and  the  associated description of  $W(q)$ described in \cite{ball1999} and \cite{ball2004}.

At the conference Combinatorics 2024, Tao Feng brought our attention to the work of Glauberman \cite{glauberman1996} in 1996 on Suzuki groups. In \cite{glauberman1996}, Glauberman already considered the cyclic presentation of $W(q)$ implicitly and  provided suitable polarities for his investigations on the outer automorphisms of $S_6$. He did not provide a description of the Suzuki-Tits ovoids, however. Perhaps the   reason was that the cyclic presentations of $PG(3,q)$ and $W(q)$ were not described explicitly in the literature until 1999 in \cite{ball1999} for $PG(3,q)$ and 2004 in \cite{ball2004} for $W(q)$. 

Following the work of Glauberman, we determine the absolute points of the polarities given in \cite{glauberman1996} and thereby describe  the Suzuki-Tits ovoids in the cyclic presentation of $PG(3,q)$. As a consequence, we obtain a description of these ovoids as a set of zeroes of a polynomial over $\mathbb{F}_{q^4}$. This is the third description of  Suzuki-Tits ovoids, after the original description by Tits \cite{tits1962} in 1962 and the constructions by Wilson \cite{wilson2009,wilson2013}. 

 The content of the paper is organized as follows. In Section 2, we recall preliminary results from finite geometry and  projective polynomials. Elliptic quadrics in the cyclic presentation of $PG(3,q)$ are considered in Section 3. New results on Suzuki-Tits ovoids are reported in Section 4.

\section{Preliminaries}
Let $q=2^m$. 	Let $E \supset K \supset F$ be a chain of finite fields, $|E|=q^4$, $|K|=q^2$, $|F|=q$.  
	In the sequel, we define the following sets:
	\[
	\mathcal{P}:=\{x \in E \mid x^{q^3+q^2+q+1}=1\}, 
	\]
		\[
	\mathcal{O}:=\{x \in E \mid x^{q^2+1}=1\}, 
	\]
		\[
	S:=\{x \in K \mid x^{q+1}=1\}. 
	\]
Each element $x \in \mathcal{P}$ has a unique decomposition $x = \la u$, where $\la \in S$ and $u \in \mathcal{O}$.

\subsection{The roots of the polynomial $P_a(X)=X^{q+1}+X+a$}

For $a \in E$, let
\[
P_a(X)=X^{q+1}+X+a.
\]

We note that  the more general polynomial forms $X^{q+1} + rX^q + sX + t$ with $s \ne r^q$ and $t \ne rs$ can be transformed into this form by the substitution $X=(s-r^q)^{1/q} X_1-r.$ It is clear that $P_a(X)$ have no multiple roots. 
A comprehensive method for solving the equation $P_a(X)=0$ was recently developed in \cite{mesnager2021,mesnager2021b}.  
Adopting the notation in \cite{mesnager2021}, we define   
$$F(X)=  1+X^q+X^{q^2}.$$  
%
%
%
%
%
%
%
%
From \cite{mesnager2021}, we have the following characterization  of $P_a(X)$ when it has $q+1$ roots.

\begin{lemma}[\cite{mesnager2021}] 
	\label{eqnmesnager1} 
	Let  $a \in E$. Then the polynomial $P_a(X)=X^{q+1}+X+a$ has  $q+1$ zeros in $E$  if and only if $F(a)=0$. In this case, there exists $u \in E \backslash K$ such that $a =\dfrac{(u+u^q)^{q^2+1}}{(u+u^{q^2})^{q+1}}$. Let $c=(u+u^q)^{q-1}$. 
		Then the $q+1$ zeros in $E$ of $P_a(X)$ are $x_0=\dfrac{1}{1+c}$ and 
		$x_\gam=x_0(u+\gam)^{q^2-q}$ for $\gam \in F$.  
\end{lemma}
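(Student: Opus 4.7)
The polynomial $P_a$ is separable (its derivative $X^q+1$ shares no zero with $P_a$) and so has at most $q+1$ zeros in $E$.

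For the \emph{sufficient direction}, suppose $a = (u+u^q)^{q^2+1}/(u+u^{q^2})^{q+1}$ with $u \in E \setminus K$, and write $u_i := u^{q^i}$. A rearrangement yields the symmetric form
\[
a = \frac{(u_0+u_1)(u_2+u_3)}{(u_0+u_2)(u_1+u_3)},
\]
manifestly invariant under $u_i \mapsto u_{i+2}$, so $a \in K$. The characteristic-two identity
\[
(u_0+u_1)(u_2+u_3) + (u_1+u_2)(u_3+u_0) = (u_0+u_2)(u_1+u_3)
\]
then gives $a + a^q = 1$, whence $F(a) = 0$. To verify the formula for the zeros, substitute $x_\gamma = (u_0+u_1)(u_2+\gamma)/[(u_0+u_2)(u_1+\gamma)]$ into $P_a$; the identity
\[
(u_1+u_2)(u_3+\gamma) + (u_2+\gamma)(u_1+u_3) = (u_1+\gamma)(u_2+u_3)
\]
reduces $x_\gamma^{q+1}+x_\gamma$ to exactly the above expression for $a$, so $P_a(x_\gamma)=0$; the case $x_0 = (u_0+u_1)/(u_0+u_2)$ is analogous. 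Distinctness of the $q+1$ zeros follows from $u \notin K$.

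For the \emph{necessary direction}, assume $F(a) = 0$, which via Frobenius is equivalent to $a \in K$ and $a + a^q = 1$, so in particular $a \notin F$. Take $x_0 := \sqrt{a} \in K$ (well-defined in characteristic two): since $(x_0+1)^2 = a+1 = a^q$, uniqueness of square roots gives $x_0^q = x_0+1$, whence $x_0^{q+1} = x_0(x_0+1) = a + x_0$ and $P_a(x_0) = 0$. Substituting $X = x_0 + 1/V$ into $P_a$ and using $x_0^q = x_0+1$, the equation $P_a(X) = 0$ with $V \ne 0$ reduces to the additive equation
\[
V^q + V = 1/x_0.
\]
The $F$-linear map $V \mapsto V^q + V$ on $E$ has kernel $F$, so its image equals $\ker \mathrm{Tr}_{E/F}$. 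Since $1/x_0 \in K$ and the trace from $E$ to $F$ vanishes on $K$, this equation admits $q$ solutions, giving $q$ further zeros and $q+1$ in total.

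To exhibit $u \in E\setminus K$ realising $a = (u+u^q)^{q^2+1}/(u+u^{q^2})^{q+1}$, pick any solution $V_0$ of $V^q+V = 1/x_0$; since $x_0 \notin F$, necessarily $V_0 \in E \setminus K$. Set $u := V_0^{q^3}$, so $u^q = V_0$ and $u \in E \setminus K$. Iterating Frobenius on $V_0^q + V_0 = 1/x_0$ yields the relations $V_0^{q^{j+1}} + V_0^{q^j} = 1/x_0^{q^j}$ for $j = 0,1,2$, which combine (using $x_0 \in K$ and $x_0 + x_0^q = 1$) to
\[
u + u^q = V_0^{q^3} + V_0 = 1/(x_0+1), \qquad u + u^{q^2} = V_0^{q^3} + V_0^q = 1/(a+x_0).
\]
Since $x_0, a+x_0 \in K$, the exponents simplify as $(x_0+1)^{q^2+1}=(x_0+1)^2$ and $(a+x_0)^{q+1}=(a+x_0)^2$, whence
\[
\frac{(u+u^q)^{q^2+1}}{(u+u^{q^2})^{q+1}} = \frac{(a+x_0)^2}{(x_0+1)^2} = \frac{a^2+a}{a+1} = a.
\]
The \emph{main obstacle} of the proof is identifying the slick ansatz $x_0 = \sqrt{a}$ for the initial zero in the necessary direction; once in place, the remaining arguments unfold by careful but essentially direct calculation.
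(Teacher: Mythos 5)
The paper offers no proof of this lemma at all --- it is quoted from Kim--Choe--Mesnager \cite{mesnager2021} --- so there is no in-paper argument to compare against; your proposal is a from-scratch reconstruction. Most of it checks out. The rewriting $a=(u_0+u_1)(u_2+u_3)/\bigl((u_0+u_2)(u_1+u_3)\bigr)$ with $u_i=u^{q^i}$, the two characteristic-two identities, the verification that $x_0=1/(1+c)=(u_0+u_1)/(u_0+u_2)$ and $x_\gamma=x_0(u+\gamma)^{q^2-q}$ are $q+1$ distinct roots, the equivalence $F(a)=0\iff a\in K,\ a+a^q=1$, the root $x_0=\sqrt{a}$, the Artin--Schreier substitution $X=x_0+1/V$ leading to $V^q+V=1/x_0$ with $q$ solutions, and the recovery of $u=V_0^{q^3}$ are all correct. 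Two small points: the separability remark fails for $a=0$ (where $P_0=X(X+1)^q$), though $F(0)=1$ makes that case harmless; and the step $(a+x_0)^{q+1}=(a+x_0)^2$ requires $a+x_0=x_0^{q+1}=N_{K/F}(x_0)\in F$, not merely $a+x_0\in K$ as you state.

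The genuine gap is the forward implication of the equivalence: you never show that $P_a$ having $q+1$ zeros \emph{forces} $F(a)=0$. Your two halves prove ``special form of $a$ $\Rightarrow$ $F(a)=0$ and $q+1$ roots'' and ``$F(a)=0$ $\Rightarrow$ special form and $q+1$ roots''; neither excludes an $a$ with $F(a)\neq 0$ whose $P_a$ nonetheless has $q+1$ distinct roots in $E$. A short patch: for $a\neq 0$ every root satisfies $x^q=(x+a)/x$, so on the root set the Frobenius acts as the M\"obius map with matrix $A_0=\left(\begin{smallmatrix}1 & a\\ 1 & 0\end{smallmatrix}\right)$, and $x\mapsto x^{q^4}$ acts via $A_3A_2A_1A_0$ with $A_i=\left(\begin{smallmatrix}1 & a^{q^i}\\ 1 & 0\end{smallmatrix}\right)$. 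If there are $q+1\ge 3$ distinct roots, this M\"obius map fixes at least three points of $\mathbb{P}^1(E)$, hence is the identity, hence the matrix is scalar; its $(2,1)$ entry is $1+a^q+a^{q^2}=F(a)$, which must vanish. (This is essentially the route of \cite{mesnager2021}.) Note that the paper only ever invokes the direction you did prove --- in Lemma \ref{pointsofline} the authors compute $F(a)=0$ and then read off the roots --- so the omission does not propagate, but it does leave the lemma as stated unproved.
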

\subsection{The cyclic presentation of $PG(3,q)$}

In this section we recall the cyclic presentation of $PG(3,q)$ first described explicitly  in \cite[Section 5.1]{ball1999}. 
The point set $\mathcal{P}$ consists of $(q^3 + q^2 + q + 1)$-th roots of unity   in $E$. Planes are given by the zeros of equations of the form
\[
\sig^{q^2+q+1}x^{q^2+q+1} + \sig^{q+1}x^{q+1} + \sig x + 1 = 0,
\]
where $\sig \in \mathcal{P}$.  Lines are given by the zeros of polynomials of the form
\[
L_{\al\be}(x):= x^{q+1}+\al x + \be,
\] 
where $\beta \in \mathcal{P}$ and $\alpha \in E$ such that
\begin{equation} \label{conditionline}
	\al^{q+1} = \be^q+\be^{q^2+q+1}.
\end{equation}
We first note the following.
\begin{lemma} \label{conditionlinecompact} Let $\be \in \mathcal{P}$ and $\al \in E$ satisfying condition \eqref{conditionline}. If $\al \ne 0$, then  $\al^{q^2-1}\be^{q+1}=1$.
\end{lemma}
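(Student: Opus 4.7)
The plan is to apply Frobenius to condition \eqref{conditionline} and use the defining relation $\beta^{q^3+q^2+q+1}=1$ for elements of $\mathcal{P}$ to reduce everything to the target identity. Since $\alpha\ne 0$, the equality $\alpha^{q+1}=\beta^q+\beta^{q^2+q+1}$ forces $\beta^q+\beta^{q^2+q+1}\ne 0$, so I can freely divide by it below.

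First, I would raise \eqref{conditionline} to the $q$-th power, obtaining
\[
\alpha^{q^2+q}=\beta^{q^2}+\beta^{q^3+q^2+q}.
\]
Dividing this relation by $\alpha^{q+1}=\beta^q+\beta^{q^2+q+1}$ (using $\alpha\ne 0$) yields
\[
\alpha^{q^2-1}=\frac{\beta^{q^2}+\beta^{q^3+q^2+q}}{\beta^q+\beta^{q^2+q+1}}.
\]
Thus the goal reduces to checking the clean identity
\[
\beta^{q+1}\bigl(\beta^{q^2}+\beta^{q^3+q^2+q}\bigr)=\beta^q+\beta^{q^2+q+1},
\]
which when expanded reads $\beta^{q^2+q+1}+\beta^{q^3+q^2+2q+1}=\beta^q+\beta^{q^2+q+1}$, i.e. $\beta^{q^3+q^2+2q+1}=\beta^q$.

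The last equation is immediate from $\beta\in\mathcal{P}$: since $\beta^{q^3+q^2+q+1}=1$, multiplying by $\beta^q$ gives $\beta^{q^3+q^2+2q+1}=\beta^q$, as required. There is no real obstacle here; the only care needed is to keep the exponents straight and to justify the division by $\alpha^{q+1}$, which is precisely where the hypothesis $\alpha\ne 0$ is used. I would present the argument in this order (Frobenius, divide, simplify using the order of $\beta$) so that the role of each hypothesis is transparent.
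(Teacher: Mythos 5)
Your proof is correct and is essentially the paper's argument in a slightly different packaging: the paper computes $(\al^{q+1})^{q-1}\be^{q+1}$ directly, which amounts to the same quotient $\frac{\be^{q^2}+\be^{q^3+q^2+q}}{\be^q+\be^{q^2+q+1}}$ you obtain by Frobenius-and-divide, and both simplifications rest on $\be^{q^3+q^2+q+1}=1$. Your explicit justification of the division via $\al\ne 0$ is a welcome touch that the paper leaves implicit.
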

\begin{proof} We have
\begin{align*}
(\al^{q+1})^{q-1}\be^{q+1} &= (\be^q+\be^{q^2+q+1})^{q-1}\be^{q+1}= \be^{q(q-1)}(1+\be^{q^2+1})^{q-1}\be^{q+1} \\
&= \be^{q^2+1}\dfrac{1+\be^{q^3+q}}{1+\be^{q^2+1}} = \dfrac{\be^{q^2+1}+\be^{q^3+q^2+q+1}}{1+\be^{q^2+1}}=1,
\end{align*}
and the proof follows. 
\end{proof}

 It was shown  in \cite{ball1999} how the lines $L_{\al\be}(x)$ are obtained.  In view of recent results from \cite{mesnager2021}, an alternative way to see that these lines containing points from $\mathcal{P}$ is as follows.
\begin{lemma} \label{pointsofline} Let $\be \in \mathcal{P}$ and $\al \in E$ satisfying condition \eqref{conditionline}. Then the equation 
	\begin{equation} \label{eqna}
		X^{q+1}+\al X + \be =0 
	\end{equation}
has $q+1$ roots in $\mathcal{P}$. 	
\end{lemma}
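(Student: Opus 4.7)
The plan is to (i) reduce \eqref{eqna} to the projective form handled by Lemma~\ref{eqnmesnager1} in order to obtain $q+1$ roots in $E$, and then (ii) verify directly that every root lies in $\mathcal{P}$. The case $\al = 0$ is immediate: condition \eqref{conditionline} forces $\be \in \mathcal{O}$, the equation reads $X^{q+1} = \be$, and since $\be^{(q-1)(q^2+1)} = 1$ there are $q+1$ solutions in $E^*$, each satisfying $X^{(q+1)(q^2+1)} = 1$ and hence lying in $\mathcal{P}$.

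For $\al \ne 0$ I would substitute $X = \al^{1/q} Y$ (using that Frobenius is a bijection on $E$) to transform \eqref{eqna} into $Y^{q+1} + Y + a = 0$ with $a = \be \al^{-(q+1)/q}$, and check that $F(a) = 0$. Multiplying $F(a)$ by $\al^{q(q+1)}$ clears denominators; substituting $\al^{q^2-1} = \be^{-(q+1)}$ from Lemma~\ref{conditionlinecompact} together with $\al^{q(q+1)} = \be^{q^2} + \be^{-1}$ (obtained by raising the defining constraint to the $q$-th power and using $\be \in \mathcal{P}$) cancels every term in characteristic $2$. Lemma~\ref{eqnmesnager1} then produces $q+1$ roots of \eqref{eqna} in $E$.

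To finish I would show each root $X$ (necessarily nonzero since $\be \ne 0$) lies in $\mathcal{P}$ by computing $X^{q^3+q^2+q+1}$ directly. From $X^{q+1} = \al X + \be$ one derives $(\al X + \be) X^{q^2} = \al^q (\al X + \be) + \be^q X$, and therefore
\[
X^{q^3+q^2+q+1} = (\al X + \be)(\al^{q^2} X^{q^2} + \be^{q^2}) = (\al^{q^2+q} + \be^{q^2})(\al X + \be) + \al^{q^2} \be^q X.
\]
Substituting $\al^{q^2+q} + \be^{q^2} = \be^{-1}$ and $\al^{q^2} \be^q = \al \be^{-1}$ (from Lemma~\ref{conditionlinecompact} together with $\be \in \mathcal{P}$) collapses the right-hand side to $1$ in characteristic $2$, as required.

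The main obstacle I foresee is the bookkeeping in these two algebraic simplifications: the three ambient identities---condition \eqref{conditionline}, Lemma~\ref{conditionlinecompact}, and $\be^{q^3+q^2+q+1} = 1$---must be invoked at precisely the right moments so that the characteristic $2$ cancellations become visible rather than opaque. No serious structural difficulty lies beyond this.
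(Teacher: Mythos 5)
Your proposal is correct. The first half coincides with the paper's: the substitution $X=\al^{1/q}Y=\al^{q^3}Y$ is exactly the one used there, and your verification that $F(a)=0$ by clearing denominators with $\al^{q(q+1)}$ and invoking $\al^{q^2-1}=\be^{-(q+1)}$ and $\al^{q^2+q}=\be^{q^2}+\be^{-1}$ checks out (the paper instead computes $a=\be^{q^2+1}/(1+\be^{q^2+1})\in K$ and observes $F(a)=0$ from $\be^{(q^2+1)(q+1)}=1$; both are routine). Where you genuinely diverge is the second half. The paper proves membership in $\mathcal{P}$ by taking the explicit roots $x_0=\al^{q^3}/(1+c)$ and $x_0(u^q+\gam)^{q-1}$ supplied by Lemma \ref{eqnmesnager1}, importing the identity $a=c^q/(1+c)^{q+1}$ from \cite{projectb105} to show $x_0^2\in\mathcal{P}$, and noting that $(u^q+\gam)^{q-1}$ is a $(q-1)$-th power. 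You instead compute $X^{q^3+q^2+q+1}=(\al X+\be)(\al^{q^2}X^{q^2}+\be^{q^2})$ directly for an arbitrary root, reduce $X^{q^2}(\al X+\be)$ via the $q$-th power of the defining relation, and collapse everything to $1$ using only \eqref{conditionline}, Lemma \ref{conditionlinecompact} and $\be\in\mathcal{P}$; I verified that $\al^{q^2+q}+\be^{q^2}=\be^{-1}$ and $\al^{q^2}\be^q=\al\be^{-1}$ do give $X^{q^3+q^2+q+1}=\al\be^{-1}X+1+\al\be^{-1}X=1$. Your route is more self-contained (no explicit root parametrization, no external identity) and uniformly covers $\al=0$ as well; the trade-off is that the paper's explicit description of the roots is reused verbatim in the proof of Theorem \ref{ellipticquadric}, so the paper's longer argument is doing double duty that yours would not.
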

\begin{proof} 
	
1.	If $\al =0$, then $\beta^{q^2+1}=1$. In particular, there exists $e \in E$ such that $e^{q^2-1}=\beta$.  Then the set of solutions of \eqref{eqna} is  
$
 \{ se^{q-1} \mid s \in S\}, 
$
	and each element $se^{q-1}$ is in $\mathcal{P}$.  
	
2. Consider the case $\al \ne 0$. By substituting $X=\al^{q^3}Y$,  the equation \eqref{eqna}  can be transformed into
\begin{equation} \label{eqnb}
	Y^{q+1}+Y+a=0,
\end{equation}
where $a:= \be \al^{-(q^3+1)}$. From Lemma \ref{conditionlinecompact}, we have $\al^{q^2-1}=\be^{q^3+q^2}$, so that  
\[
\al^{q^3+1}=\al^{q^3-q}\al^{q+1}=\be^{q^3+1}\al^{q+1}= \be^{q^3+1}(\be^q+\be^{q^2+q+1}) = \be^{q^3+q+1}(1+\be^{q^2+1}). 
\]
Then 
\[
a = \dfrac{\be}{\al^{q^3+1}}= \dfrac{\be}{\be^{q^3+q+1}(1+\be^{q^2+1})}= \dfrac{1}{\be^{q^3+q}(1+\be^{q^2+1})}= \dfrac{\be^{q^2+1}}{1+\be^{q^2+1}}.
\]
 And so $a \in K$.

Following Subsection 2.1, we let  $F(a)= a^{q^2}+a^q+1.$  It follows readily that $F(a)=0$. 
By Lemma \ref{eqnmesnager1},  there exists $u \in E \backslash K$ such that $a =\dfrac{(u+u^q)^{q^2+1}}{(u+u^{q^2})^{q+1}}$. 
Let $c = (u+u^q)^{q-1} \in \mathcal{P}$. 
The  roots of the equation  \eqref{eqnb} are  given by
$ 
y_0=\dfrac{1}{1+c},
$ 
and
$ 
y_0(u+\gam)^{q^2-q},
$ 
for each $\gam \in F$.

The  roots of the original equation \eqref{eqna} are then
$ 
x_0=\dfrac{\al^{q^3}}{1+c},
$ 
and
$ 
x_0(u+\gam)^{q^2-q},
$ 
for each $\gam \in F$.  
In \cite[p.516]{projectb105}, it was shown that 	
\[
a= \frac{c^q}{(1+c)^{q+1}},
\]
so that
\[
a(1+c)^2 = \frac{c^q}{(1+c)^{q-1}} \in \mathcal{P}. 
\]
We have
\[
x_0^2= \frac{\al^{2q^3}}{(1+c)^{2}} =\frac{a\al^{2q^3} }{a(1+c)^{2}} = \frac{\al^{q^3-1}\be}{a(1+c)^{2}}.
\]
We note that $\al^{q^3-1}\be$ is a $(q-1)$-th power and so it is in $\mathcal{P}$. 
It follows that $x_0 \in \mathcal{P}$, and since
\[
(u+\gam)^{q^2-q}= (u^q+\gam)^{q-1},
\]
all the $q+1$ roots of \eqref{eqna} are also in $\mathcal{P}$. 
\end{proof}

\begin{lemma}[\cite{ball1999}]  \label{connectingline} The line joining two points $y$ and $z$ is given by the zeroes of the polynomial $X^{q+1}+\alpha X +\beta=0$, where
	\[
	\alpha = \dfrac{y^{q+1}+z^{q+1}}{y+z},
	\]
\[
\beta= \dfrac{y^{q+1}z+z^{q+1}y}{y+z}.
\]
	
\end{lemma}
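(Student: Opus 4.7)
The plan is to verify three properties of the pair $(\alpha,\beta)$ given in the statement: (i) both $y$ and $z$ are roots of $X^{q+1}+\alpha X+\beta$, (ii) $\beta\in\mathcal{P}$, and (iii) the line condition \eqref{conditionline} holds. Once these are in hand, Lemma \ref{pointsofline} guarantees that $X^{q+1}+\alpha X+\beta$ has $q+1$ roots in $\mathcal{P}$ and therefore determines a line of the cyclic presentation; since this line passes through $y$ and $z$, and two distinct points determine a unique line of $PG(3,q)$, it must be the line joining them. Note that $y\ne z$ forces $y+z\ne 0$ in characteristic $2$, so $\alpha$ and $\beta$ are well defined.

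Setting $s:=y+z$ and using $y^q+z^q=s^q$, one simplifies $\beta=yzs^{q-1}$ and, via the characteristic-$2$ factorisation $y^{q+1}+z^{q+1}=y(y+z)^q+z^q(y+z)$, also $\alpha=ys^{q-1}+z^q$. Substituting,
\[
y^{q+1}+\alpha y+\beta = y^{q+1}+yz^q+ys^{q-1}(y+z) = y^{q+1}+yz^q+ys^q = 0,
\]
since $ys^q=y(y^q+z^q)=y^{q+1}+yz^q$; the argument for $z$ is symmetric, giving (i). For (ii), from $(yz)^{q^3+q^2+q+1}=1$ and $y+z\in E^{\ast}$,
\[
\beta^{q^3+q^2+q+1}=(yz)^{q^3+q^2+q+1}(y+z)^{(q-1)(q^3+q^2+q+1)}=(y+z)^{q^4-1}=1.
\]

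The main computation is (iii). Expanding directly, $\alpha^{q+1}=y^{q+1}s^{q^2-1}+yz^{q^2}s^{q-1}+(yz)^qs^{q^2-q}+z^{q^2+q}$, $\beta^q=(yz)^qs^{q^2-q}$, and $\beta^{q^2+q+1}=(yz)^{q^2+q+1}s^{q^3-1}$. Cancelling the common term $(yz)^qs^{q^2-q}$, the required identity $\alpha^{q+1}=\beta^q+\beta^{q^2+q+1}$ reduces to
\[
y^{q+1}s^{q^2-1}+yz^{q^2}s^{q-1}+z^{q^2+q}=(yz)^{q^2+q+1}s^{q^3-1}.
\]
Multiplying both sides by $s=y+z$ and expanding $(y+z)^{q^i}=y^{q^i}+z^{q^i}$: in characteristic $2$ the left-hand side collapses to $y^{q^2+q+1}+z^{q^2+q+1}$ (each cross term occurs twice), while the right-hand side becomes $(yz)^{q^2+q+1}(y^{q^3}+z^{q^3})=y^{q^3+q^2+q+1}z^{q^2+q+1}+y^{q^2+q+1}z^{q^3+q^2+q+1}$, which equals $y^{q^2+q+1}+z^{q^2+q+1}$ by $y,z\in\mathcal{P}$.

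The only delicate step is (iii); the membership $y,z\in\mathcal{P}$ is essential, since the identity fails for generic elements of $E^{\ast}$. The pleasant point is that, after clearing the denominator $s$, both sides collapse to the same symmetric expression $y^{q^2+q+1}+z^{q^2+q+1}$, whose $\mathcal{P}$-invariance mirrors the cyclic structure of the presentation.
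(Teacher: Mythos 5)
Your proposal is correct. The paper itself gives no proof of this lemma, simply attributing it to \cite{ball1999}, so there is no internal argument to compare against; your verification is a self-contained substitute. The three checks are all sound: with $s=y+z$ the simplifications $\beta=yzs^{q-1}$ and $\alpha=ys^{q-1}+z^q$ are right in characteristic $2$, the substitution confirming $y^{q+1}+\alpha y+\beta=0$ works (and $z$ follows by the symmetry of $\alpha,\beta$ in $y,z$), the norm computation $\beta^{q^3+q^2+q+1}=s^{q^4-1}=1$ gives $\beta\in\mathcal{P}$, and the reduction of $\alpha^{q+1}=\beta^q+\beta^{q^2+q+1}$ to $y^{q^2+q+1}+z^{q^2+q+1}=(yz)^{q^2+q+1}\left(y^{q^3}+z^{q^3}\right)$, which holds precisely because $y,z\in\mathcal{P}$, is verified correctly. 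The concluding step is also legitimate in the paper's framework: since lines of the cyclic presentation are by definition the zero sets of the polynomials $L_{\alpha\beta}$ with $\beta\in\mathcal{P}$ and $\alpha$ satisfying \eqref{conditionline}, your pair $(\alpha,\beta)$ determines a line through $y$ and $z$, and uniqueness of the line through two distinct points finishes the argument. One could make explicit that the correspondence between admissible pairs $(\alpha,\beta)$ and lines is a bijection (a short count shows there are exactly $(q^2+1)(q^2+q+1)$ such pairs), but this is already implicit in the paper's description of the model and does not affect your argument.
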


%

\subsection{Known results on the Suzuki-Tits ovoid}

We  collect some materials from \cite[p.41]{dembowski}, \cite{ball2004} and \cite{thas2015}.

A \textit{correlation} of $PG(3,q)$ is a bijection $\sigma$ of the point set of $PG(3,q)$ onto the plane set of $PG(3,q)$, such that the $q + 1$ points of any line $l$ are mapped  onto the $q+1$ planes containing a line $l'$. The line $l'$ is the image of $l$ under $\sigma$. The point set of any plane $P$ of $PG(3,q)$ is mapped onto the set of all planes containing the point $\sigma^{-1}(P)$. 
A \textit{polarity} of $PG(3,q)$ is a correlation of order $2$.  A polarity $\pi$ is  \textit{symplectic} if and only if every point $p$ of  $PG(3,q)$  is absolute, that is $p \in \pi(P)$.  
A  line $l$ is \textit{totally isotropic} (also known as self-polar  in \cite{thas2015}) with respect to a correlation $\sigma$ if $\sigma(l)=l$.

Let $W (q)$ be the point-line geometry formed by all points of $PG(3,q)$ and all totally isotropic lines with respect to a symplectic polarity of $PG(3,q).$  The geometry $W(q)$ is called a tactical configuration in \cite{dembowski} but  otherwise is more commonly known as a \textit{generalized quadrangle} of order $(q,q)$, see for example \cite{ball2004}.

A \textit{correlation} of $W(q)$ is   a bijection $\al$ of $\mathcal{P}\cup\mathcal{L}$ onto itself such that $\al(\mathcal{P})=\mathcal{L}$, $\al(\mathcal{L})=\mathcal{P}$,
$p \in l$ if and only if $\al(l) \in \al(p)$ for $p \in \mathcal{P}$ and $l \in \mathcal{L}$. Such a correlation $\alpha$ is a \textit{polarity} if $\al^2=1$. A point $p$ of $\mathcal{P}$ is \textit{absolute} with respect to $\al$ if $p \in \al(p)$.

\begin{theorem}[\cite{tits1962}] \label{tits}  Suppose that $q=2^m$. The geometry $W (q)$ admits a polarity $\alpha$ if and only if $m$ is odd. In such  case, the absolute points of $\alpha$   form an ovoid $\mathcal{T}$ of $PG(3,q).$ These ovoids are called Suzuki-Tits ovoids. A Suzuki-Tits
ovoid is an elliptic quadric if and only if $q = 2$. 
\end{theorem}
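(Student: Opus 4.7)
My plan is to prove Theorem~\ref{tits} in three parts: (i) $W(q)$ admits a polarity iff $m$ is odd; (ii) the absolute points of such a polarity form an ovoid of $PG(3,q)$; and (iii) this ovoid is an elliptic quadric iff $q=2$.

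For (i), I would work with coordinates in a symplectic basis of $\mathbb{F}_q^4$. Since $q$ is even, alternating and symmetric bilinear forms coincide, so $W(q)$ is self-dual via the natural map sending a point $\langle v \rangle$ to its symplectic-polar plane. To promote this duality to an involutory correlation of $W(q)$, one composes it with a field automorphism $\sigma$ of $\mathbb{F}_q$ and verifies that the resulting correlation $\alpha$ satisfies $\alpha^2 = \mathrm{id}$ precisely when $\sigma^2$ is the Frobenius $x \mapsto x^2$. On $\mathbb{F}_{2^m}$ such a $\sigma$ is forced to be $x \mapsto x^{2^{(m+1)/2}}$, which exists iff $(m+1)/2 \in \mathbb{Z}$, i.e.\ iff $m$ is odd; this gives both directions.

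For (ii), with $\alpha$ in hand, I would first show that two distinct absolute points $p,p'$ cannot be joined by a totally isotropic line $l$: if they were, then $\alpha(p)$ and $\alpha(p')$ would be two distinct lines both passing through the point $\alpha(l)$ and each containing its own defining absolute point, from which a short incidence argument in the generalized quadrangle produces a contradiction. A counting argument through a fixed absolute point, using the $(q+1)$-regularity of $W(q)$, then yields $|\mathcal{T}| = q^2+1$. Finally, since $\mathcal{T}$ meets every totally isotropic line in exactly one point and any line of $PG(3,q)$ not in $W(q)$ meets $\mathcal{T}$ in at most two points (by an easy intersection bound using the polarity on the span of two collinear absolute points), $\mathcal{T}$ is an ovoid of $PG(3,q)$.

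For (iii), when $q=2$ one has $|\mathcal{T}|=5$, and a direct combinatorial check shows that any $5$-point ovoid of $PG(3,2)$ is an elliptic quadric. For $q>2$ with $m$ odd, the polarity produces the Suzuki group $Sz(q)$ as a subgroup of the stabilizer of $\mathcal{T}$; comparing orders shows $Sz(q)$ cannot be embedded in the orthogonal group $\mathrm{P}\Gamma\mathrm{O}^{-}(4,q)$ stabilizing any elliptic quadric, and hence $\mathcal{T}$ is not an elliptic quadric. The main obstacle I expect is the explicit verification in (i) that the twisted correlation does send totally isotropic lines to totally isotropic lines and squares to the identity on the whole of $W(q)$; this delicate coordinate computation, which hinges on the fortuitous identity $\sigma^2 = \mathrm{Frob}_2$ in characteristic $2$, is the heart of Tits's original construction.
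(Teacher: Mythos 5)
This theorem is stated in the paper as a quoted result of Tits (with \cite{tits1962} and \cite{taylor1992} as references); the paper gives no proof of it, so there is nothing internal to compare your proposal against. Judged on its own, your outline follows the classical route (duality of $W(q)$ twisted by a field automorphism, the Payne--Thas-style argument that absolute points of a polarity of a generalized quadrangle form an ovoid, and a group-theoretic distinction from the elliptic quadric), but several steps as written do not go through. In part (i), the ``natural map sending a point $\langle v\rangle$ to its symplectic-polar plane'' is a polarity of $PG(3,q)$, not a duality of $W(q)$: a duality of $W(q)$ must send points to totally isotropic \emph{lines}, and constructing such a map at all (it exists only for $q$ even, where $W(q)\cong Q(4,q)$ is self-dual) is the nontrivial part of the argument that your sketch elides. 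Moreover, showing that your particular twisted candidates square to the identity only when $\sigma^2=\mathrm{Frob}_2$ proves the ``if'' direction; the ``only if'' direction requires knowing that \emph{every} duality of $W(q)$ is the standard one composed with a collineation from $P\Gamma Sp(4,q)$, i.e.\ a determination of the full correlation group, which you do not address.

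Two further points need repair. In part (ii), the assertion that a non-isotropic line meets $\mathcal{T}$ in at most two points is exactly the statement that an ovoid of the generalized quadrangle $W(q)$ is an ovoid of $PG(3,q)$; this is true but is not an ``easy intersection bound'' --- the standard proof counts totally isotropic lines through the points of the polar line $l^{\perp}$, and you should supply it. In part (iii), the order comparison does not suffice as stated: $|Sz(q)|=q^2(q^2+1)(q-1)$ divides $|PGL(2,q^2)|=q^2(q^2+1)(q^2-1)$, and the stabilizer of an elliptic quadric is $P\Gamma L(2,q^2)$, so no contradiction arises from orders alone. One must either invoke Dickson's classification of subgroups of $PSL(2,q^2)$ to exclude $Sz(q)$, or argue geometrically (e.g.\ that for $q>2$ some plane section of $\mathcal{T}$ is not a conic), which is closer to Tits's original argument.
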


\section{Elliptic quadrics in the cyclic  presentation of $PG(3,q)$}

We recall from \cite{projectb103} the following description of elliptic quadrics in $PG(3,q)$. 
 \begin{theorem}  
	\label{quadric}
	Let $E \supset K \supset F$ be a chain of finite fields, $|E|=q^4$, $|K|=q^2$, $|F|=q$, $q=2^m$.  Then 
	$$Q(x)= Tr_{K/F}(N_{E/K}(x))$$ 
	is a non-degenerate quadratic form on $4$-dimensional vector space  $E$ over $F$. 
	Moreover,  the set 
	$$\mathcal{O}=\{u\in E \mid N_{E/K}(u)=1\}=\{u\in E \mid u^{q^2+1}=1\}$$ 
	determines an elliptic quadric in $PG(3,q)$. 
\end{theorem}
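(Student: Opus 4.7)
The plan is to verify three claims in sequence: that $Q(x)=Tr_{K/F}(N_{E/K}(x)) = x^{q^2+1}+x^{q^3+q}$ is an $F$-quadratic form on the four-dimensional $F$-space $E$; that it is non-degenerate; and that its projective zero set coincides with $\mathcal{O}$ and has cardinality $q^2+1$, from which it follows (by the dichotomy hyperbolic/elliptic for non-degenerate quadrics in $PG(3,q)$) that $\mathcal{O}$ is an elliptic quadric.

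For the quadratic-form axioms, $Q(\la x)=\la^2 Q(x)$ for $\la\in F$ is immediate since $\la^q=\la$ gives $\la^{q^2+1}=\la^2\in F$, which can be pulled through the trace. The polarisation satisfies
\[
B(x,y):=Q(x+y)+Q(x)+Q(y)=Tr_{K/F}\bigl(xy^{q^2}+x^{q^2}y\bigr)=xy^{q^2}+x^{q^2}y+x^qy^{q^3}+x^{q^3}y^q,
\]
which is $F$-bilinear term by term. For non-degeneracy, assume $B(x,\cdot)\equiv 0$ on $E$; then the additive polynomial
\[
L_x(Y)=x^{q^2}Y+x^{q^3}Y^q+xY^{q^2}+x^qY^{q^3}
\]
has degree at most $q^3<q^4=|E|$ but vanishes identically, so every coefficient vanishes, forcing $x=x^q=x^{q^2}=x^{q^3}=0$ and hence $x=0$.

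For the elliptic conclusion, if $u\in\mathcal{O}$ then $N_{E/K}(u)=1$, so $Q(u)=Tr_{K/F}(1)=1+1=0$ in characteristic $2$; thus $\mathcal{O}\subseteq\{Q=0\}$. Since $q^3+q^2+q+1=(q+1)(q^2+1)$, we have $\mathcal{O}\subseteq\mathcal{P}$, and therefore the $q^2+1$ elements of $\mathcal{O}$ represent $q^2+1$ distinct projective points of $PG(3,q)$ in the cyclic model. A direct count shows this is already everything: for $u\in E^\ast$, $Q(u)=0$ iff $u^{q^2+1}$ lies in the $q-1$ nonzero elements of $\ker Tr_{K/F}\subset K$; since $u\mapsto u^{q^2+1}$ is a surjection $E^\ast\twoheadrightarrow K^\ast$ with kernel of order $q^2+1$, the affine zero set has $(q-1)(q^2+1)$ nonzero points, which project to $q^2+1$ points. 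Hence the projective zero set of $Q$ equals $\mathcal{O}$; as a non-degenerate quadric in $PG(3,q)$ with $q^2+1$ points it cannot be hyperbolic (which has $(q+1)^2>q^2+1$ points), so it is elliptic.

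The proof contains no serious obstacle; the step that most repays care is the coordination of the three incarnations of $\mathcal{O}$, namely as a cyclic subgroup of $E^\ast$, as a subset of the point set $\mathcal{P}$ of the cyclic model, and as the projective vanishing locus of $Q$. The arithmetic input that ties these together is that $\mathcal{P}$ is a system of representatives for $PG(3,q)$, which is what converts the affine count $(q-1)(q^2+1)$ cleanly into the projective count $q^2+1$ and lets the cardinality match finish the argument.
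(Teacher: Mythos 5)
The paper does not actually prove this theorem: it is recalled verbatim from \cite{projectb103} and used as a black box, so there is no internal proof to compare against. Judged on its own, your argument is correct and complete. The computation of the polar form, the vanishing-polynomial argument for non-degeneracy (a nonzero additive polynomial of degree $q^3$ cannot vanish on all $q^4$ elements of $E$), the fibre count $(q-1)(q^2+1)$ for the affine zero locus via surjectivity of the norm onto $K^*$, and the point-count dichotomy $(q+1)^2$ versus $q^2+1$ all check out; the affine count $1+(q-1)(q^2+1)=q^3-q^2+q$ even matches the standard formula for an elliptic form, so the conclusion is forced. The only step worth making explicit is the one you flag yourself: that the $q^2+1$ elements of $\mathcal{O}$ really are pairwise projectively inequivalent. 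This follows because $u/v\in F^*$ with $u,v\in\mathcal{O}$ forces $(u/v)^{\gcd(q^2+1,\,q-1)}=1$, and $\gcd(q^2+1,q-1)=\gcd(2,q-1)=1$ precisely because $q$ is even; equivalently, $\mathcal{P}$ is a transversal of the $F^*$-cosets since $\gcd(q^3+q^2+q+1,q-1)=\gcd(4,q-1)=1$. This is part of the standing setup of the cyclic presentation, but since the whole identification of $\mathcal{O}$ with a set of projective points hangs on it, it deserves the one line.
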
 

It is readily checked that $\mathcal{O} \in \mathcal{P}$. In \cite{projectb103},  a  presentation of $PG(3,q)$ to which the elliptic quadric $\mathcal{O}$ belongs was not provided. Here we show that $\mathcal{O}$ is an ovoid  in the cyclic presentation of $PG(3,q)$.
 \begin{theorem} \label{lineintersectionquadric} \label{ellipticquadric} Let $\be \in \mathcal{P}$ and $\al \in E$ satisfying condition \eqref{conditionline}. Let $\la \in S$. Then the   system $(I)$ of equations
    \begin{numcases}{}
    X^{q+1}+\al X+ \beta=0  \label{eqn1}
   \\
   X^{q^2+1}=\la \label{eqn2}
\end{numcases}
 has at most two solutions in $\mathcal{P}$. 
\end{theorem}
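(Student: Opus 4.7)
The plan is to interpret the system geometrically and invoke the standard fact that an elliptic quadric meets every line of $PG(3,q)$ in at most two points. Equation \eqref{eqn1} cuts out a line $L$ of the cyclic presentation by Lemma \ref{pointsofline}, so it suffices to show that the condition \eqref{eqn2} restricted to $\mathcal{P}$ defines an elliptic quadric, namely a multiplicative translate of $\mathcal{O}$.

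To identify the solution set of \eqref{eqn2} in $\mathcal{P}$, I will exploit the unique decomposition $x=\tau u$ with $\tau\in S$ and $u\in\mathcal{O}$ recorded at the end of Section 2 (which is a direct consequence of $|\mathcal{P}|=(q+1)(q^2+1)$ and $\gcd(q+1,q^2+1)=1$). Since $\tau\in K$ one has $\tau^{q^2}=\tau$, and since $u\in\mathcal{O}$ one has $u^{q^2+1}=1$, hence $x^{q^2+1}=\tau^2$. Because $|S|=q+1$ is odd, squaring is a bijection on $S$, so there is a unique $\sigma\in S$ with $\sigma^2=\lambda$, and
\[
\{x\in\mathcal{P}\mid x^{q^2+1}=\lambda\}=\sigma\mathcal{O}.
\]

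Next I verify that $\sigma\mathcal{O}$ is an elliptic quadric in the cyclic presentation. Multiplication by $\sigma$ is an $F$-linear bijection of the $4$-dimensional $F$-vector space $E$ and therefore induces a collineation of $PG(3,q)$. Moreover, since $S\subset \mathcal{P}$ and $\mathcal{P}$ is a group under multiplication, multiplication by $\sigma$ preserves $\mathcal{P}$, so in the cyclic presentation this collineation is realised by the bijection $x\mapsto\sigma x$ on the point set. Applying it to the elliptic quadric $\mathcal{O}$ of Theorem \ref{quadric} yields that $\sigma\mathcal{O}$ is also an elliptic quadric.

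The simultaneous solutions in $\mathcal{P}$ of \eqref{eqn1} and \eqref{eqn2} are thus precisely the points of $L\cap\sigma\mathcal{O}$, and since an elliptic quadric meets any line in at most two points the system $(I)$ has at most two solutions. I do not anticipate a serious obstacle; the only point that deserves a line of care is confirming that multiplication by $\sigma$ genuinely preserves the cyclic presentation (both $\mathcal{P}$ and, by transport of structure, the family of lines of Lemma \ref{pointsofline}), which is immediate from $\sigma\in\mathcal{P}$.
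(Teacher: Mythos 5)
Your proof is correct, but it takes a genuinely different route from the paper. You argue geometrically: the fibre $\{x\in\mathcal{P}\mid x^{q^2+1}=\lambda\}$ is the translate $\sigma\mathcal{O}$ with $\sigma^2=\lambda$ (using the unique decomposition $x=\tau u$, $\tau^{q^2+1}=\tau^2$, and the fact that squaring is bijective on the odd-order group $S$), and multiplication by $\sigma\in\mathcal{P}$ is an $F$-linear collineation preserving the cyclic presentation, so $\sigma\mathcal{O}$ is again an elliptic quadric and meets the line \eqref{eqn1} in at most two points. The paper instead computes directly: it takes the explicit parametrization of the $q+1$ roots of \eqref{eqn1} from Lemma \ref{pointsofline}, namely $x_0$ and $x_0(u^q+\gamma)^{q-1}$ for $\gamma\in F$, substitutes into \eqref{eqn2}, and reduces the problem to a quadratic $A\gamma^2+B\gamma+C=0$ in $\gamma$, with a short case analysis on whether $A=0$ (in which case $x_0$ itself is the second solution). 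Your argument is shorter and more conceptual, and as a bonus exhibits $\mathcal{P}$ as a disjoint union of $q+1$ elliptic quadrics $\sigma\mathcal{O}$; however, it rests entirely on two imported facts --- that the zero sets $L_{\alpha\beta}$ really are the lines of $PG(3,q)$ on $E$ (from the cited construction of the cyclic presentation) and that $\mathcal{O}$ is an elliptic quadric (Theorem \ref{quadric}) --- so the theorem becomes essentially a corollary of the faithfulness of the model. The paper's computational proof gives an independent verification carried out inside the cyclic presentation itself, which is closer to the authors' stated aim of exhibiting $\mathcal{O}$ as an ovoid with respect to these concretely given lines, and it reuses the root-parametrization machinery of Lemma \ref{eqnmesnager1} that recurs later in the paper.
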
 
\begin{proof}
%
%
%
We follow the notation in the proof of Lemma \ref{pointsofline}. 
The $q+1$ roots of \eqref{eqn1} are in $\mathcal{P}$ and are  given by
$ 
x_0=\dfrac{\al^{q^3}}{1+c}
$ 
and
$ 
x_0(u^q+\gam)^{q-1} 
$ 
for each $\gam \in F$.

1. Suppose that there exists $\gamma \in F$  such that $x_0(u^q+\gam)^{q-1}$ is a solution of \eqref{eqn2}, that is, 
\[
x_0^{q^2+1}(u^q+\gam)^{(q-1)(q^2+1)}=\la. 
\]
Let
\[
s:= \dfrac{\la}{x_0^{q^2+1}} = (u^q+\gam)^{(q-1)(q^2+1)}=\dfrac{(u^{q^2}+\gam)^{q^2+1}}{(u^q+\gam)^{q^2+1}}.
\]
We have that $(u^q+\gam)^{q^2+1} \ne 0$. Then 
\begin{align*}
	 &(u^q+\gam)^{q^2+1}s  = (u^{q^2}+\gam)^{q^2+1} \\
	\iff  & (u^{q^3}+\gam) (u^{q }+\gam)s = (u^{q^2}+\gam) (u+\gam)\\
	\iff  & A\gam^2+B\gam+C=0,
\end{align*}
where 
\[A=s+1,
\]
\[
B=s(u^{q^3}+u^q)+u^{q^2}+u,
\]
\[
C=su^{q^3+q}+u^{q^2+1}.
\] 

2. The calculation above shows that $x_0(u^q +\gam)^{q-1}$ is   a solution of $(I)$    if and only if $\gamma$ is a solution of the equation 
\begin{equation} \label{quad}
AX^2+BX+C=0.
\end{equation}

If $s=1$, then $A=0$, and \eqref{quad} has a unique solution $\gam_0$. Then $(I)$ has two solutions, which are $x_0$ and $x_0(u^q+\gamma_0)^{q-1}$.

If $s\ne1$, then \eqref{quad} is an irreducible quadratic and has at most two solutions. Furthermore, since $s=\la/x_0^{q^2+1}$, we see that $x_0$ is not a solution of $(I)$.  It follows that  $(I)$  has at most two solutions.  
\end{proof}

\section{Suzuki-Tits ovoids}
\subsection{Polarities in the cyclic presentation of $PG(3,q)$}

 Let $q=2^m$, where $m\ge 3$ is odd. We still consider the cyclic presentation of  $PG(3,q)$ and follow the terminology introduced in Subsection 2.3. 
For every point $\la$ of $\mathcal{P}$, let
\[
\om_\lambda(x):=(\la^{q^2}x)^{q^2+q+1}+(\la^{q^2}x)^{q+1}+ (\la^{q^2}x)+1
\]
be a polynomial over  $\mathbb{F}_{q^4}$ whose zeros correspond to the points of a plane.  Then  a symplectic polarity   $\om$  of $PG(3,q)$ was given in \cite{ball1999} as $\om: \la \mapsto \om_\la(x)=0$. 

Totally isotropic lines with respect to $\om$ are those of the form
\[
X^{q+1} + (\be^{(q^2+q+2)/2} + \be^{(q+1)/2})X + \be = 0,
\]
for each $\be \in \mathcal{P}$. This was proved in \cite{ball2004}. We denote each such line parameterized by $\be$ as $L(\be)$. Let $\mathcal{L}$ be the set of all such totally isotropic lines.  Let $W (q)= (\mathcal{P},\mathcal{L})$ be the corresponding generalized quadrangle. 

This presentation of the generalized quadrangle $W(q)$ was already considered by Glauberman in \cite{glauberman1996}, where it was refered to as the  ``symplectic geometry $\mathcal{G}$ on $E$''. However, the totally isotropic lines were not described explicitly by Glauberman. 

In \cite[p. 64]{glauberman1996}, a correlation 
$\delta$ of $W(q)$ was defined with respect to   a symplectic basis of $E$ over $F$. Furthermore, it was proved (also in \cite[p. 64]{glauberman1996}) that this map $\delta$ can be defined equivalently by the following.

\begin{definition}[The map $\delta$] \label{themapdelta} Let $\delta: W(q) \rightarrow W(q)$ be defined as follows:
\begin{enumerate}
\item $\delta(x)= L(x^{2q})$ for every point $x \in \mathcal{P}$,
\item $\delta(L(x)) =x$ for every line $L(x) \in \mathcal{L}$.
\end{enumerate}
\end{definition}

\begin{remark} It was pointed out in \cite[p.6]{ball2004} that if the point $x$ lies on $L(e)$ then $e$ lies on $L(x^{2q})$. This provides an alternative way to show that $\delta$ is a correlation of $W(q)$.
\end{remark}

\begin{definition}[The maps $\sigma_i$ and $c(\sigma)$] For each  nonnegative integer $i$, let $\sigma_i$ be the automorphism $x \mapsto x^{2^i}$ of $E$. 
For each element $\sigma$  of $Gal(E)$, let $c(\sigma)$ denote the collineation of $W(q)$ given by 
$$W^{c(\sigma)} = \{\sigma(w) \mid w \in W\},$$ 
for every point or line $W$. 
\end{definition}

In \cite[Proposition 3.6]{glauberman1996}, Glauberman proved the following.
\begin{lemma}
Let $\theta_0 = \sigma_{\sqrt{2q}} $ and $\theta_1=  \sigma_{q^2\sqrt{2q}}$. Then $\pi_i := c(\theta_i)^{-1} \delta$ are polarities of  $W(q)$. 
\end{lemma}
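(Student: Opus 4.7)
The plan is to show that $\pi_i$ is a correlation of $W(q)$ satisfying $\pi_i^2 = \mathrm{id}$, which by definition makes it a polarity. The correlation property is automatic: $\delta$ is a correlation by Definition \ref{themapdelta}, and $c(\theta_i)^{-1}$ is a collineation, so $\pi_i = c(\theta_i)^{-1}\delta$ interchanges $\mathcal{P}$ and $\mathcal{L}$ and preserves incidence. All the substantive work is in verifying the involutory property.

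The key technical input is that $\delta$ commutes with $c(\sigma)$ for every $\sigma \in \mathrm{Gal}(E)$. I would first observe that $c(\sigma)$ sends the totally isotropic line $L(\beta)$ to $L(\sigma(\beta))$: the defining equation $X^{q+1} + (\beta^{(q^2+q+2)/2} + \beta^{(q+1)/2})X + \beta = 0$ has coefficients that are polynomial in $\beta$ with integer exponents, and applying the field automorphism $\sigma$ term-by-term replaces $\beta$ by $\sigma(\beta)$ throughout. With this, on a point $c(\sigma)\delta(x) = c(\sigma)L(x^{2q}) = L(\sigma(x)^{2q}) = \delta c(\sigma)(x)$, and on a line $c(\sigma)\delta(L(\beta)) = \sigma(\beta) = \delta(L(\sigma(\beta))) = \delta c(\sigma)(L(\beta))$. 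Hence $\delta$ and $c(\sigma)$ commute.

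Using this commutativity together with the fact that $c$ is a homomorphism from $\mathrm{Gal}(E)$ into the collineation group of $W(q)$, I obtain
\[
\pi_i^2 \;=\; c(\theta_i)^{-1}\delta\, c(\theta_i)^{-1}\delta \;=\; c(\theta_i^{-2})\,\delta^2.
\]
A direct calculation from Definition \ref{themapdelta} gives $\delta^2(x) = \delta(L(x^{2q})) = x^{2q}$ on points and $\delta^2(L(\beta)) = \delta(\beta) = L(\beta^{2q})$ on lines, so $\delta^2 = c(\phi)$ where $\phi$ is the automorphism $y \mapsto y^{2q}$. It remains to check that $\theta_i^2 = \phi$ in each case. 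Squaring $\theta_0 : y \mapsto y^{\sqrt{2q}}$ gives $y \mapsto y^{2q}$ directly. Squaring $\theta_1 : y \mapsto y^{q^2\sqrt{2q}}$ gives $y \mapsto y^{2q^5}$, which coincides with $\phi$ on $E$ since $y^{q^4} = y$. Substituting, $\pi_i^2 = c(\theta_i^{-2})c(\theta_i^2) = \mathrm{id}$, so $\pi_i$ is a polarity.

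The principal obstacle, though quite manageable, is the commutation $\delta c(\sigma) = c(\sigma)\delta$, which requires using the explicit parameterisation of the totally isotropic lines from \cite{ball2004} to confirm that $\mathrm{Gal}(E)$ preserves this family. Everything else reduces to elementary exponent arithmetic in $E^{\times}$.
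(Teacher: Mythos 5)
Your proof is correct. Note that the paper itself offers no proof of this lemma: it is quoted verbatim from Glauberman's Proposition 3.6, so your argument is supplying the verification that the citation leaves implicit, and it is the natural one. The decomposition you use --- $\delta$ commutes with every Galois collineation $c(\sigma)$ because the family of totally isotropic lines is Galois-stable with $L(\beta)^{c(\sigma)}=L(\sigma(\beta))$; a direct computation gives $\delta^2=c(\phi)$ with $\phi\colon y\mapsto y^{2q}$; and $\theta_0^2=\phi$, $\theta_1^2=\sigma_{2q^5}=\phi$ on $E$, whence $\pi_i^2=c(\theta_i^{-2})c(\theta_i^{2})=\mathrm{id}$ --- is exactly the computation one would expect behind the cited result. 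Two small points of hygiene. First, ``$\delta$ is a correlation by Definition \ref{themapdelta}'' is a little too quick: incidence reversal is not part of the definition but is precisely the content of the Remark that follows it, namely the fact from \cite{ball2004} that $x\in L(e)$ if and only if $e\in L(x^{2q})$; you should invoke that explicitly (your own proof of commutativity does not need it, but the correlation property of $\delta$ does). Second, the coefficient of $L(\beta)$ involves the exponent $(q+1)/2$, which is not an integer for $q$ even; it denotes the unique square root in the odd-order cyclic group $\mathcal{P}$, and the identity $\sigma(\beta^{(q+1)/2})=\sigma(\beta)^{(q+1)/2}$ still holds because field automorphisms commute with squaring and square roots are unique in groups of odd order. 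So your ``term-by-term'' argument survives, but the phrase ``integer exponents'' is not literally accurate and is worth repairing.
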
  

\begin{remark}Using the relation $x^{1/q^2}=x^{q^2}$ for $x \in E$, we can describe $\pi_0$ and $\pi_1$ explicitly.
The map $\pi_0: W(q) \rightarrow W(q)$ is defined  as follows:
\begin{enumerate}
\item $\pi_0(x)= L(x^{\sqrt{2q}})$ for every point $x \in \mathcal{P}$,
\item $\pi_0(L(x)) =x^{1/\sqrt{2q}}$ for every line $L(x) \in \mathcal{L}$.
\end{enumerate} 
The map $\pi_1: W(q) \rightarrow W(q)$ is defined as follows:
\begin{enumerate}
\item $\pi_1(x)= L(x^{q^2\sqrt{2q}})$ for every point $x \in \mathcal{P}$,
\item $\pi_1(L(x)) =x^{q^2/\sqrt{2q}}$ for every line $L(x) \in \mathcal{L}$.
\end{enumerate} 
\end{remark}

\subsection{Absolute points of polarities}
Recall that $q=2^m$, where $m\ge 3$ is odd. Let $s:=q-\sqrt{2q}+1,$ $t:=q+\sqrt{2q}+1$.  We define
$
\mathcal{O}_s:=\{x \in E \mid x^{s}=1\}, 
$
and
$
\mathcal{O}_t:=\{x \in E \mid x^{t}=1\}. 
$ Then $q^3+q^2+q+1=(q+1)(q^2+1)=(q+1)st$,   $\mathcal{O}=\mathcal{O}_s \mathcal{O}_t$,
and every element $x \in \mathcal{P}$ can be decomposed uniquely as $x=\la u v, \la \in S, u \in \mathcal{O}_s, v \in \mathcal{O}_t$. 


\begin{lemma} \label{lary} Let $x \in \mathcal{O}_s \cup \mathcal{O}_t$ such that $x \ne 1$. Let $\la = \left(x^{q-1}+\dfrac{1}{x^{q-1}} \right)^{q-1}.$ Then
\begin{equation*}  
x^{q+1}+\sqrt{(\la^{\sqrt{2q}}+1)\la} x + \la^{\sqrt{2q}/2}=0. 
\end{equation*}
\end{lemma}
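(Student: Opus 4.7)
My plan is to recast the claimed identity as a rational identity in $x$ that is verifiable by direct algebraic manipulation, using characteristic-$2$ structure to handle the square-root coefficients. The first step is to simplify $\lambda$. Since $st=q^2+1$, we have $\mathcal{O}_s\cup\mathcal{O}_t\subseteq\mathcal{O}$, so $x^{q^2+1}=1$, i.e. $x^{q^2}=x^{-1}$. The characteristic-$2$ identity $(a+b)^{q-1}=(a^q+b^q)/(a+b)$ (valid since $q$ is a power of $2$) applied with $a=x^{q-1}$, $b=x^{1-q}$ yields, after using $x^{q^2}=x^{-1}$,
$$\lambda=\frac{x^{q+1}+x^{-(q+1)}}{x^{q-1}+x^{1-q}}.$$
The second step is to eliminate square roots. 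Since Frobenius is injective in characteristic $2$, the claimed equation is equivalent to its square
$$x^{2(q+1)}+(\lambda^{r}+1)\lambda\, x^{2}+\lambda^{r}=0,\qquad r:=\sqrt{2q},$$
which is what I would verify.

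Next I would split into the two cases. For $x\in\mathcal{O}_s$ one has $x^{q+1}=x^{r}$ and $x^{q-1}=x^{r-2}$, giving $\lambda=(x^{r}+x^{-r})/(x^{r-2}+x^{2-r})$; for $x\in\mathcal{O}_t$ the relation $x^{q+1}=x^{-r}$ yields $\lambda=(x^{r}+x^{-r})/(x^{r+2}+x^{-r-2})$. Since $r$ is a power of $2$, in both cases the numerator and denominator of $\lambda$ are squares in characteristic $2$, so $\lambda^{1/2}$ is a transparent rational function of $x$. Furthermore, raising to the $r$-th power commutes with addition, and a further application of $x^{q\mp r}=x^{\mp 1}$ (coming from $x^{s}=1$ or $x^{t}=1$) collapses $\lambda^{r}$ to a rational expression whose denominator is $x^{2}+x^{-2}$; taking square roots gives $\lambda^{r/2}=(x^{r-1}+x^{1-r})/(x+x^{-1})$ in the first case and the analogous formula with $r$ replaced by $-r$ in the second.

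Substituting these expressions into the squared identity and clearing denominators, I expect the identity to reduce, via the characteristic-$2$ factorisation $x^{2r}+1=(x^{r}+1)^{2}$, to a direct algebraic cancellation. The principal obstacle is the bookkeeping of exponents, in particular because $\lambda^{(r+1)/2}$ involves a half-integer power; the remedy is to recognise $\lambda^{1/2}(\lambda^{r}+1)$ as the square root of $(\lambda^{r}+1)^{2}\lambda$ and, after the simplifications above, to identify it with an explicit rational function having denominator $(x+1)^{2}$. Pleasingly, both cases should reduce to essentially the same rational identity (with $r$ replaced by $-r$), so the symmetry between $\mathcal{O}_s$ and $\mathcal{O}_t$ is made manifest and a single calculation suffices up to this sign flip.
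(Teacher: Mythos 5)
Your proposal is correct, but it takes a different route from the paper's. The paper never verifies directly that $x$ is a root: it invokes Lemma \ref{connectingline} to observe that $x$ automatically satisfies $X^{q+1}+\al X+\be=0$, where $\al$ and $\be$ are the coefficients of the line joining $x$ and $1/x$ (note $\be=(x+1/x)^{q-1}$), and then the whole proof consists of identifying these coefficients, namely showing $\be=\la^{\sqrt{2q}/2}$ and $\al^2=(\la^{\sqrt{2q}}+1)\la$. The square root is handled there not by squaring the equation but by computing $\al^{q+1}=\be+\be^q$ and $\al^{q-1}=\be^{\sqrt{2q}-1}$ separately and taking the quotient to get $\al^2$. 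Your approach instead squares the claimed identity (legitimate, since Frobenius is injective) and verifies it as a rational identity in $x$ by substituting the explicit expressions $\la=(x^{q+1}+x^{-(q+1)})/(x^{q-1}+x^{1-q})$ and $\la^{\sqrt{2q}}=(x^{2\sqrt{2q}\mp 2}+x^{2\mp 2\sqrt{2q}\cdot(\pm 1)})/(x^2+x^{-2})$ reduced via $x^{q\mp\sqrt{2q}}=x^{\mp 1}$; I checked that after clearing denominators the resulting polynomial identity does cancel termwise in characteristic $2$, so the plan goes through (the worry about half-integer powers of $\la$ is moot once you square, since the middle coefficient becomes $(\la^{\sqrt{2q}}+1)\la$). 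Your route is more elementary and self-contained, at the cost of heavier exponent bookkeeping; the paper's route buys the geometric fact, used again in Theorem \ref{abspts}, that the equation in question is precisely the line through $x$ and $x^{-1}$. One small point you should make explicit: the denominator $x^{q-1}+x^{1-q}$ is nonzero exactly because $x\ne 1$ and $\gcd(2(q-1),q^2+1)=1$.
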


\begin{proof} We first note that $\la \in S$. By Lemma \ref{connectingline}, the equation that  describes the line going through $x$ and $1/x$ is given by $X^{q+1}+\al X +\be =0$, where  $\beta=\left( x+\dfrac{1}{x} \right)^{q-1}$ and  
	\[
\alpha = \dfrac{x^{q+1}+1/x^{q+1}}{x+1/x}.
\]
Since $x \in \mathcal{O}_s \cup \mathcal{O}_t$, either $x^{q-1}= x^{q\sqrt{2q}}$ or $x^{q-1}= x^{-q\sqrt{2q}}$. This implies that $\la=\be^{q\sqrt{2q}}$ and so $\beta = \la^{\sqrt{2q}/2}$.
It remains to prove that $\al^2 = (\la^{\sqrt{2q}}+1)\la$. For convenience, we denote   $x^n+1/x^n$ by $D_n$. 
We have
\[
D_{q+1}^{q+1} = D_{q+1}^q D_{q+1} = D_{q^2+q}D_{q+1}=D_{q-1}D_{q+1} = D_1^{2q}+D_1^2,
\]
\[
\left(\be+\be^q\right)D_1^{q+1} = (D_1^{q-1}+D_1^{q^2-q})D_1^{q+1} = D_1^{2q}+D_1^{2}. 
\]
This implies that $D_{q+1}^{q+1}=\left(\be+\be^q\right)D_1^{q+1}$, so that
 \begin{equation} \label{alpha1}
\alpha^{q+1}=\left(\dfrac{x^{q+1}+1/x^{q+1}}{x+1/x}\right)^{q+1} = \left(\dfrac{D_{q+1}}{D_1}\right)^{q+1}= \be+\be^q.
\end{equation}  
On the other hand, since $x \in \mathcal{O}_s \cup \mathcal{O}_t$, either $x^{q+1}= x^{\sqrt{2q}}$ or $x^{q+1}= x^{-\sqrt{2q}}$, and so
\begin{equation} \label{alpha2}
	\alpha^{q-1}=\left(\dfrac{x^{q+1}+1/x^{q+1}}{x+1/x}\right)^{q-1} = \left(\dfrac{D_{1}^{\sqrt{2q}}}{D_1}\right)^{q-1}=\be^{\sqrt{2q}-1}. 
\end{equation}
 From \eqref{alpha1} and \eqref{alpha2}, it follows that
$
 \al^2= \al^{q+1}/\al^{q-1} =(\be+\be^q) \be^{1-\sqrt{2q}}.
$
From the relations $\la=\be^{q\sqrt{2q}}$ and $\beta = \la^{\sqrt{2q}/2}$, we obtain that 
\[
\al^2 = (\be+\be^q) \be^{1-\sqrt{2q}} = (\la^{\sqrt{2q}/2}+\la^{q\sqrt{2q}/2}) \la^{\sqrt{2q}/2} \la = (\la^{\sqrt{2q}}+1)\la,
\]
and the proof follows.
\end{proof}

We now introduce two sets.  
Let 
\[
\mathcal{T}_0:=\mathcal{O}_s \cup \left\{\left(v^{q-1}+\dfrac{1}{v^{q-1}}\right)^{q-1}uv \mid u \in \mathcal{O}_s, v \in \mathcal{O}_t \backslash \{1\}\right\},
\]
and 
\[
\mathcal{T}_1:=\mathcal{O}_t\cup \left\{\left(u^{q-1}+\dfrac{1}{u^{q-1}}\right)^{q-1}uv \mid u \in \mathcal{O}_s \backslash \{1\}, v \in \mathcal{O}_t \right\}. 
\]

\begin{theorem} \label{abspts} For $i\in\{0,1\}$, the set $\mathcal{T}_i$ is the set of absolute points of the polarity $\pi_i$. In particular, $\mathcal{T}_i$ is a Suzuki-Tits ovoid. 
\end{theorem}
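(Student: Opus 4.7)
The plan is to verify $\mathcal{T}_i \subseteq \mathrm{Abs}(\pi_i)$, establish $|\mathcal{T}_i| = q^2+1$, and then invoke Theorem~\ref{tits}, which guarantees that $\mathrm{Abs}(\pi_i)$ is an ovoid of exactly $q^2+1$ points, to force equality. The cardinality count is straightforward: the unique decomposition $x = \lambda u v$ with $\lambda \in S$, $u \in \mathcal{O}_s$, $v \in \mathcal{O}_t$ makes $(u,v) \mapsto (v^{q-1}+1/v^{q-1})^{q-1} u v$ injective on $\mathcal{O}_s \times (\mathcal{O}_t \setminus \{1\})$ with image disjoint from $\mathcal{O}_s$, yielding $|\mathcal{T}_0| = s + s(t-1) = st = q^2+1$; the count for $\mathcal{T}_1$ is symmetric.

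A point $x$ is absolute for $\pi_0$ iff $x^{q+1} + \alpha_0 x + \beta_0 = 0$ with $\beta_0 = x^{\sqrt{2q}}$ and $\alpha_0 = \beta_0^{(q^2+q+2)/2} + \beta_0^{(q+1)/2}$, so that $\alpha_0^2 = \beta_0^{q+1}(\beta_0^{q^2+1}+1)$. For the easy half $x \in \mathcal{O}_s \subset \mathcal{T}_0$, the relation $x^{q-\sqrt{2q}+1}=1$ gives $x^{q+1} = x^{\sqrt{2q}} = \beta_0$, and $x^{q^2+1} = x^{st} = 1$ gives $\beta_0^{q^2+1} = 1$; hence $\alpha_0 = 0$ and the equation reduces to $\beta_0 + \beta_0 = 0$. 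The case $x \in \mathcal{O}_t \subset \mathcal{T}_1$ for $\pi_1$ is identical with $\sqrt{2q}$ replaced by $q^2\sqrt{2q}$.

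The generic case is $x = \lambda u v \in \mathcal{T}_0$ with $\lambda = (v^{q-1}+1/v^{q-1})^{q-1}$, $u \in \mathcal{O}_s$, $v \in \mathcal{O}_t \setminus \{1\}$. Using $\lambda^{q+1}=1$, $u^{q+1}=u^{\sqrt{2q}}$, $v^{q+1}=v^{-\sqrt{2q}}$, one computes $x^{q+1} = (u/v)^{\sqrt{2q}}$ and $x^{q^2+1} = \lambda^2$; these yield $\beta_0^{q+1} = (u/v)^{2q}$, $\beta_0^{q^2+1} = \lambda^{2\sqrt{2q}}$, and $\alpha_0 = (u/v)^q (\lambda^{\sqrt{2q}}+1)$. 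Substituting into the absolute point equation, dividing by $u^{\sqrt{2q}}$, applying $u^{q-\sqrt{2q}+1}=1$, multiplying by $v^{\sqrt{2q}}$, and using $v^{q+\sqrt{2q}+1}=1$ to rewrite $v^{1-q+\sqrt{2q}}$ as $v^{2+2\sqrt{2q}}$ collapses the equation to
\[
1 + (\lambda^{\sqrt{2q}}+1)\lambda\, v^{2+2\sqrt{2q}} + \lambda^{\sqrt{2q}} v^{2\sqrt{2q}} = 0,
\]
which is precisely what one obtains by squaring Lemma~\ref{lary} applied to $v$ (with the same $\lambda$) and multiplying through by $v^{2\sqrt{2q}}$. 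Hence $x$ is absolute for $\pi_0$, and the symmetric argument handles $\mathcal{T}_1$ via Lemma~\ref{lary} applied to $u$.

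The main obstacle is the generic-case reduction: one has to recognise that the particular form $\lambda = (v^{q-1}+1/v^{q-1})^{q-1}$ is exactly what is needed to make the cancellations coming from membership in $S$, $\mathcal{O}_s$, and $\mathcal{O}_t$ align the absolute point equation with the squared identity of Lemma~\ref{lary}; everything else is essentially exponent bookkeeping in characteristic two. Once $\mathcal{T}_i \subseteq \mathrm{Abs}(\pi_i)$ is established, equality follows from $|\mathcal{T}_i| = q^2+1$ together with the size of the ovoid of absolute points supplied by Theorem~\ref{tits}, completing the description of the Suzuki-Tits ovoid in the cyclic presentation.
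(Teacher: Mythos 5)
Your proposal is correct and follows essentially the same route as the paper: reduce to the inclusion $\mathcal{T}_i \subseteq \mathrm{Abs}(\pi_i)$ via the count $|\mathcal{T}_i|=st=q^2+1$ and Theorem \ref{tits}, compute the line coefficient from $x^{q+1}(x^{q^2+1}+1)$ using the decomposition $x=\lambda uv$, and collapse the absolute-point condition to the identity of Lemma \ref{lary} applied to $v$ (resp.\ $u$). The only differences are cosmetic normalizations of the final equation.
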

\begin{proof}  We will prove the theorem for the case $i=0$, as the case   $i=1$ is similar. Since $|\mathcal{T}_0|=q^2+1$, it is sufficient to show that every point of $\mathcal{T}_0$ is an absolute point of  $\pi_0$, that is $x \in L(x^{\sqrt{2q}})$ for every $x \in \mathcal{T}_0$. 
Let such $x$ be of the form $x=\la u v$, where $\la \in S, u \in \mathcal{O}_s, v \in \mathcal{O}_t$. We have
\[
 A: = (x^{\sqrt{2q}})^{(q^2+q+2)/2} + (x^{\sqrt{2q}})^{(q+1)/2}  = \left( x^{q+1}(x^{q^2+1}+1) \right)^{\sqrt{2q}/2} 
 = \left( \dfrac{u}{v} \right)^q (\la+1)^{\sqrt{2q}}. 
\]
Then
\begin{align*}
x \in L(x^{\sqrt{2q}}) &\iff x^{q+1} + Ax + x^{\sqrt{2q}}= 0\\
& \iff (uv)^{q+1}+\left( \dfrac{u}{v} \right)^q (\la+1)^{\sqrt{2q}} (\la u v) + (\la u v)^{\sqrt{2q}}=0 \\
& \iff v^{2(q+1)}+(\la^{\sqrt{2q}}+1)\la v^2 + \la^{\sqrt{2q}}=0 \\
& \iff v^{q+1}+\sqrt{(\la^{\sqrt{2q}}+1)\la} v + \la^{\sqrt{2q}/2}=0. 
\end{align*}
The last line is clearly true for $x \in \mathcal{O}_s$ and is also true for $x \in \mathcal{T}_0 \backslash \mathcal{O}_s$, by Lemma \ref{lary}. 
\end{proof}

\subsection{The sets $\mathcal{T}_i$ as sets of zeroes of polynomials}
We now show that $\mathcal{T}_0$ is the set of zeroes of a short polynomial over $\mathbb{F}_{q^4}$.
\begin{theorem} \label{polynomialT0}  The set   $\mathcal{T}_0$ is the set of solutions of the equation $Q_0(x)=0$, where
\[
Q_0(x)=x^{q^2+1}+x^{s(\sqrt{2q}+1)}+x^s+1.
\] 
\end{theorem}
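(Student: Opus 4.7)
The plan is to prove $\mathcal{T}_0 = \{x \in E : Q_0(x) = 0\}$ by a counting argument. Write $\sigma = \sqrt{2q}$ for brevity. Since
\[
|\mathcal{T}_0| = |\mathcal{O}_s| + |\mathcal{O}_s|(|\mathcal{O}_t|-1) = s + s(t-1) = st = q^2+1 = \deg Q_0,
\]
it suffices to show that every $x \in \mathcal{T}_0$ is a root of $Q_0$. For $x \in \mathcal{O}_s$ this is immediate: $x^s = 1$ forces $x^{s(\sigma+1)} = 1$ and $x^{q^2+1} = x^{st} = 1$, so each of the four summands of $Q_0(x)$ equals $1$ and hence $Q_0(x) = 0$ in characteristic $2$.

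For $x = \la u v$ in $\mathcal{T}_0 \setminus \mathcal{O}_s$, with $\la = (v^{q-1}+v^{-(q-1)})^{q-1}$, $u \in \mathcal{O}_s$, and $v \in \mathcal{O}_t \setminus \{1\}$, I would first reduce each exponent using $\la^{q+1} = u^s = v^t = 1$ to obtain $x^{q^2+1} = \la^2$, $x^s = \la^{-\sigma} v^{-2\sigma}$, and $x^{s(\sigma+1)} = \la^{2-\sigma} v^{2\sigma+4}$. Multiplying $Q_0(x)$ by $\la^\sigma v^{2\sigma}$ then gives
\[
\la^\sigma v^{2\sigma}(\la^2+1) + \la^2 v^{4\sigma+4} + 1 = \bigl(\la^{\sigma/2}v^\sigma(\la+1) + \la v^{2\sigma+2} + 1\bigr)^2,
\]
via the characteristic-$2$ identity $a^2 + b^2 + 1 = (a + b + 1)^2$. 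Hence $Q_0(x) = 0$ reduces to showing $G := \la^{\sigma/2}v^\sigma(\la+1) + \la v^{2\sigma+2} + 1 = 0$.

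To verify $G = 0$, I would compute $\la$ explicitly in terms of $v$. For $v \in \mathcal{O}_t$ we have $v^{q-1} = v^{-\sigma-2}$, and combining $(a+b)^{q-1} = (a^q+b^q)/(a+b)$ with $v^{q(\sigma+2)} \equiv v^{-\sigma} \pmod{t}$ leads to
\[
\la = \left(\frac{v(v^\sigma+1)}{v^{\sigma+2}+1}\right)^{2} \quad \text{and} \quad \la+1 = \left(\frac{(v+1)(v^{\sigma+1}+1)}{v^{\sigma+2}+1}\right)^{2},
\]
the second equality resting on the characteristic-$2$ factorization $v^2(v^\sigma+1)^2 + (v^{\sigma+2}+1)^2 = (v+1)^2(v^{\sigma+1}+1)^2$. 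Substituting these closed forms into $G$ and clearing $(v^{\sigma+2}+1)^2$, the identity $G = 0$ reduces to
\[
(v^{\sigma+1}+1)^4 + v^{2\sigma+4}(v^\sigma+1)^2 + (v^{\sigma+2}+1)^2 = 0,
\]
which is immediate upon expanding each square and cancelling in characteristic~$2$. The division by $(v^{\sigma+2}+1)^2$ is legitimate because $\gcd(\sigma+2, t) = \gcd(\sigma+2, q-1) = 1$: indeed, $(\sigma+2)(\sigma-2) = 2(q-2)$ forces any odd common divisor of $\sigma+2$ and $q-1$ to divide $(q-1)-(q-2) = 1$, so $v^{\sigma+2} \neq 1$ for $v \in \mathcal{O}_t \setminus \{1\}$.

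The main obstacle will be locating the clean square-root expressions for $\la$ and $\la+1$ as rational functions of $v$; without these, the identity $G = 0$ is quite opaque. Once they are in hand, the reduction of $Q_0(x)$ to the square $G^2$ and the final verification are both routine characteristic-$2$ manipulations, and no independent bound on the number of zeros of $Q_0$ is required, since $\deg Q_0 = |\mathcal{T}_0|$ closes the counting argument automatically.
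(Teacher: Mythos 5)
Your proposal is correct and shares its overall skeleton with the paper's proof: both arguments reduce to showing that every element of $\mathcal{T}_0$ is a zero of $Q_0$ and then close with the count $\deg Q_0 = |\mathcal{T}_0| = q^2+1$, and both use the decomposition $x=\lambda uv$ with the same exponent reductions ($x^{q^2+1}=\lambda^2$, $x^{s}=\lambda^{-\sqrt{2q}}v^{-2\sqrt{2q}}$, $x^{s(\sqrt{2q}+1)}=\lambda^{2-\sqrt{2q}}v^{2\sqrt{2q}+4}$). Where you diverge is in the final verification. The paper massages $Q_0(x)=0$ into exactly the equation $v^{2(q+1)}+(\lambda^{\sqrt{2q}}+1)\lambda v^2+\lambda^{\sqrt{2q}}=0$ that already appeared in the proof of Theorem \ref{abspts}, and then simply cites that theorem, which in turn rests on Lemma \ref{lary} (proved geometrically via the line through $x$ and $1/x$). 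You instead re-verify the condition from scratch by expressing $\lambda$ and $\lambda+1$ as explicit rational functions of $v$ and checking a characteristic-two polynomial identity; this yields a self-contained argument independent of Lemma \ref{lary}, at the cost of redoing work the paper reuses. One gap in your write-up: your quantity $G$ contains the factor $\lambda^{\sqrt{2q}/2}$, and substituting the closed forms for $\lambda$ and $\lambda+1$ alone does not evaluate it; you additionally need
\[
\lambda^{\sqrt{2q}/2}=\frac{v^{-\sqrt{2q}}\left(1+v^{\sqrt{2q}+1}\right)^2}{(1+v)^2},
\]
obtained by raising $\lambda^{1/2}=v(v^{\sqrt{2q}}+1)/(v^{\sqrt{2q}+2}+1)$ to the power $\sqrt{2q}$ and using $v^{2q}=v^{-2\sqrt{2q}-2}$. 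With that supplied, your reduced identity $(v^{\sqrt{2q}+1}+1)^4+v^{2\sqrt{2q}+4}(v^{\sqrt{2q}}+1)^2+(v^{\sqrt{2q}+2}+1)^2=0$ does hold identically, your $\gcd(\sqrt{2q}+2,t)=1$ justification for dividing by $v^{\sqrt{2q}+2}+1$ is sound, and the counting step matches the paper's, so the argument goes through.
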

\begin{proof}  Let $x= \la uv\in \mathcal{P}$, where $\la \in S, u \in \mathcal{O}_s, v \in \mathcal{O}_t$. Then $x^{q^2+1}=\la^2$, $x^{s(\sqrt{2q}+1)}=\la^{s+2} (v^s)^{\sqrt{2q}+1}$ and $x^s=\la^sv^{s}$. This means that 
 \begin{align*}
Q_0(x)=0 &\iff  \la^{s+2} (v^s)^{\sqrt{2q}+1}+\la^s v^s +\la^2+1=0\\
&\iff v^{s\sqrt{2q}} + (\la^2+1)\la^{(s+2)q}v^{-s}+\la^{2q}=0 \\
&\iff v^{2\sqrt{2q}(q+1)} + (\la^2+1)\la^{(2-\sqrt{2q})q}v^{2\sqrt{2q}}+\la^{2q}=0 \\ 
&\iff v^{2\sqrt{2q}(q+1)} + (1+\la^{2q})\la^{\sqrt{2q}}v^{2\sqrt{2q}}+\la^{2q}=0.
\end{align*}
Here we used the relations $v^{q+1}=1/v^{\sqrt{2q}},$ $v^s=v^{2(q+1)}=1/v^{2\sqrt{2q}}, \la^{\sqrt{2q}}=1/\la^s$.
Taking power $1/\sqrt{2q}$ on both sides of the last line, one obtains
\[
 v^{2(q+1)}+(\la^{\sqrt{2q}}+1)\la v^2 + \la^{\sqrt{2q}}=0.
\]
From the proof of  Theorem \ref{abspts}, we see that each element of $\mathcal{T}_0$ is a solution of $Q_0(x)=0$.   We finally note that $\deg Q_0(x)=|\mathcal{T}_0| =q^2+1$ and this proves the theorem.
\end{proof}



We now describe the polynomial of degree $q^2+1$ whose roots are the elements of $\mathcal{T}_1$. Let us define the line $\ell_{e}$ by the points $x \in \cal P$ such that
\[
x^{q+1} + (e^{q^2+q+2} + e^{q+1})x + e^2 = 0.
\]
By raising this equation to the $q$ and rearranging we deduce that
$$
x \in \ell_{e} \ \Leftrightarrow \ e \in \ell_{x^{q/2}}.
$$
Let $O$ be an ovoid and define a bijective map $\tau:  O \rightarrow  O$ by
$$
\tau(x) \in \ell_x
$$
for all $x \in  O$.

In the following lemma we add the hypothesis that $x\in  O$ implies $x^{q/2} \in O$ for all $x \in   O$. This is equivalent to insisting that $ O$ is Frobenius fixed ($x\in  O$ implies $x^{2} \in O$), which is equivalent to the polynomial of degree $q^2+1$ whose zeros are the points of $ O$ being an element of ${\mathbb F}_2[X]$.

\begin{lemma} \label{fflemma}
If the points of $ O$ are Frobenius fixed, then
$$
\tau(\tau(x)^{q/2})=x
$$
for all $x \in   O$.
\end{lemma}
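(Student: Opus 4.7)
The plan is to combine the reciprocity relation $x\in\ell_e \iff e\in\ell_{x^{q/2}}$, displayed immediately before the lemma, with the fact that Frobenius fixedness keeps $(q/2)$-th powers inside $O$. Under the standing hypothesis that $\tau$ is a well defined bijection, this forces the composition in the conclusion to collapse to the identity. No extra computation beyond the reciprocity is needed; the argument is essentially a one-liner dressed up in three steps.

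I would carry out the proof in the following order. First, by the definition of $\tau$ we have $\tau(x)\in\ell_x$; substituting $e=x$ into the reciprocity, with the incident point chosen to be $\tau(x)$, immediately yields $x\in\ell_{\tau(x)^{q/2}}$. Second, since $O$ is Frobenius fixed the map $y\mapsto y^{2}$ preserves $O$, and iterating it $m-1$ times shows that $y\mapsto y^{q/2}=y^{2^{m-1}}$ also preserves $O$; in particular $y:=\tau(x)^{q/2}\in O$, so $\tau(y)$ is defined and satisfies $\tau(y)\in\ell_y$. Third, both $x$ and $\tau(y)$ now belong to $\ell_y\cap O$, and the fact that $\tau$ singles out a unique element of this intersection forces $\tau(y)=x$, which is precisely the identity $\tau(\tau(x)^{q/2})=x$.

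The only delicate point — and what I would call the main obstacle — is justifying the uniqueness used in the third step: the definition of $\tau$ must pick out exactly one element of $\ell_y\cap O$ for each $y\in O$, otherwise the two elements $x$ and $\tau(y)$ cannot be identified. This is exactly what the hypothesis that $\tau$ is a (single valued) bijective map is for; it encodes that $\ell_y$ is tangent to $O$ at $\tau(y)$. Once this is accepted as part of the setup, the proof does not require any further polynomial manipulation, and the Frobenius fixedness hypothesis enters only through the one line observation $q/2=2^{m-1}$.
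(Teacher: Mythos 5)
Your proof is correct and follows essentially the same route as the paper's: reciprocity gives $x \in \ell_{\tau(x)^{q/2}}$, Frobenius fixedness puts $\tau(x)^{q/2}$ back in $O$, and the uniqueness built into the definition of $\tau$ closes the argument. The only cosmetic difference is that you invoke uniqueness of the point of $\ell_y \cap O$ (with $y=\tau(x)^{q/2}$), whereas the paper invokes uniqueness of the line of the dual spread incident with $x$; these are equivalent formulations of the same tangency fact.
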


\begin{proof}
By definition, $x \in \ell_{\tau^{-1}(x)}$. Since $\tau(x) \in \ell_x$, we have that $x \in \ell_{\tau(x)^{q/2}}$. If $  O$ is Frobenius fixed then $\tau(x) \in  O$ implies $\tau(x)^{q/2} \in  O$. Since, there is a unique line of the spread dual to $  O$ incident with $x$, it follows that $\tau(x)^{q/2}=\tau^{-1}(x)$. 
\end{proof}

\begin{example}
The map $\tau(x)=x^{\sqrt{2/q}}$ corresponds to the Tits ovoid ${\cal T}_0$, the map $\tau(x)=x^{q^2\sqrt{2/q}}$ corresponds to the Tits ovoid ${\cal T}_1$ and the map $\tau(x)=x^{1-q}$ gives the canonical elliptic quadric which are zeros of $x^{q^2+1}+1$. It is clear that the first two examples satisfy $\tau(\tau(x)^{q/2})=x$. To verify the claim for $\tau(x)=x^{1-q}$ observe that $\tau(x) \in \ell_x$ implies that
$$
x^{1-q^2}+x^{2}(x^{q^2+1}+1)+x^2=x^{1-q^2}(1+x^{q^2+1})^2=0
$$ 
and $\tau(\tau(x)^{q/2})=x$ implies $x^{(1-q)^2q/2}=x$ implies $x^{3(q^2+1)}=1$.
\end{example}


In the following theorem $\log$ refers to $\log_2$.

\begin{theorem} \label{polynomialT1}  The set   $\mathcal{T}_1$ is the set of solutions of the equation $Q_1(x)=0$, where
\[
Q_1(x)=x^{q^2+1}+1+x^t \left(\frac{1+x^{\sqrt{2q} t(\sqrt{q/2}-1)}}{1+x^{\sqrt{2q} t}}\right)+x^t\sum_{j=0}^{\log \sqrt{q/2} -1} x^{2^j(\sqrt{2q}-2)t}
(1+x^{\sqrt{2q} t})^{2^j-1}.
\] 
\end{theorem}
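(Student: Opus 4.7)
The plan is to mirror the proof of Theorem \ref{polynomialT0}. I would parametrise each point $x \in \mathcal{P}$ as $x = \lambda u v$ with $\lambda \in S$, $u \in \mathcal{O}_s$ and $v \in \mathcal{O}_t$, and simplify $Q_1(x)$ using the cyclotomic relations $\lambda^{q+1} = 1$, $u^s = 1$ and $v^t = 1$. These relations yield compact expressions such as $x^{q^2+1} = \lambda^2$ and $x^t = \lambda^{\sqrt{2q}} u^{2\sqrt{2q}}$. The aim is to reduce the condition $Q_1(x)=0$ to the equation
\[
u^{q+1}+\sqrt{(\lambda^{\sqrt{2q}}+1)\lambda}\,u+\lambda^{\sqrt{2q}/2}=0
\]
obtained from Lemma \ref{lary} applied to $u$, which is precisely the defining condition $\lambda=(u^{q-1}+u^{-(q-1)})^{q-1}$ for points of $\mathcal{T}_1 \setminus \mathcal{O}_t$; the case $u=1$ handles $\mathcal{O}_t \subseteq \mathcal{T}_1$ trivially and can be checked directly by noting that $v^{q^2+1}=v^t=v^{\sqrt{2q}\,t}=1$ for $v\in\mathcal{O}_t$ forces each of the four pieces of $Q_1$ to contribute $1$ in characteristic two.

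Before the substitution, I would first rewrite $Q_1$ as an explicit polynomial in $\mathbb{F}_2[X]$ (expected, since $\mathcal{T}_1$ is Frobenius-fixed in the sense of Lemma \ref{fflemma}). Setting $N=\sqrt{q/2}=2^{(m-1)/2}$ and $a=\sqrt{2q}\,t$, the characteristic-$2$ identity $(1+X^a)^{2^j}=1+X^{2^ja}$ together with the binary expansion $N-1=\sum_{j=0}^{(m-3)/2} 2^j$ allow the rational expression $(1+x^{a(N-1)})/(1+x^a)$ to be expanded into genuine polynomial form and combined with the displayed sum, producing a polynomial of degree exactly $q^2+1$ with $O(m)$ monomials. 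The small cases $q=8$ (where $Q_1$ reduces to $x^{q^2+1}+x^{t(\sqrt{2q}-1)}+x^t+1$, in perfect analogy with $Q_0$) and $q=32$ (where extra terms appear) can be verified directly and used to guide the general identity.

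Substituting $x=\lambda u v$ into the resulting polynomial and reducing the exponents of $\lambda$, $u$, $v$ modulo $q+1$, $s$, $t$ respectively, each monomial becomes a product of low powers. I expect the expression to telescope, after pulling out a common monomial factor, into the square of the Lemma \ref{lary} equation in $u$, multiplied by a monomial in $v$; extracting square roots in characteristic two (a bijection) then returns the Lemma \ref{lary} identity itself. This establishes $\mathcal{T}_1 \subseteq \{x : Q_1(x)=0\}$, and since $\deg Q_1 = q^2+1 = |\mathcal{T}_1|$ the two sets coincide.

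The main obstacle will be the polynomial rewriting of $Q_1$ and the subsequent telescoping: keeping precise track of the binary expansion of $N-1$ and matching the resulting monomials against the factored form of the Lemma \ref{lary} equation. Once that closed form is in hand, the substitution and simplification parallel the proof of Theorem \ref{polynomialT0}.
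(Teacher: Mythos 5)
Your setup is sound: the parametrisation $x=\lambda uv$, the observations $x^{q^2+1}=\lambda^2$ and $x^t=\lambda^{\sqrt{2q}}u^{2\sqrt{2q}}$ (so that $v$ drops out of every term of $Q_1$, consistent with $\mathcal{T}_1$ containing all $t$ choices of $v$ for each admissible pair $(\lambda,u)$), the treatment of the $\mathcal{O}_t$ case, and the closing degree count $\deg Q_1=|\mathcal{T}_1|=q^2+1$ are all correct. The $q=8$ check against $x^{65}+x^{39}+x^{13}+1$ is also right. However, there is a genuine gap: the entire content of the theorem is the identity you describe as ``I expect the expression to telescope,'' and you never establish it. Reducing $\lambda^2+1+\lambda^{\sqrt{2q}}u^{2\sqrt{2q}}\bigl(\cdots\bigr)+\lambda^{\sqrt{2q}}u^{2\sqrt{2q}}\sum_j(\cdots)(1+\cdots)^{2^j-1}=0$ to the Lemma \ref{lary} condition $\lambda=(u^{q-1}+u^{-(q-1)})^{q-1}$ is precisely the hard step, and it is not a routine telescoping of a geometric series: the summand carries the factors $(1+x^{\sqrt{2q}t})^{2^j-1}$ with exponents $2^j-1$, which do not collapse under the binary expansion of $\sqrt{q/2}-1$ in any obvious way. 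A plan that defers exactly this computation is not a proof.

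It is worth noting that the paper avoids the direct verification you propose and instead works \emph{forwards}: mimicking the proof of Theorem \ref{polynomialT0} it first derives a polynomial identity $x^{q^2+tq+t+1}+x^{q^2+t+1}+x^t+1=0$ satisfied by all points of $\mathcal{T}_1$ (the naive analogue of $Q_0$, but of degree far exceeding $q^2+1$), then manufactures further identities by consecutively squaring and summing, and by exploiting the incidence relation $\tau(x)\in\ell_x\Rightarrow\tau(x)^{\sqrt{2q}}\in\ell_{x^{\sqrt{2q}}}$ coming from Lemma \ref{fflemma} and the map $\tau(x)=x^{q^2\sqrt{2/q}}$. A suitable $\mathbb{F}_{q^4}(x)$-linear combination of these identities, followed by division by $1+x^{\sqrt{2q}t}$, \emph{produces} $Q_1$ of degree exactly $q^2+1$, after which the cardinality argument finishes. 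This explains in particular where the awkward factors $(1+x^{\sqrt{2q}t})^{2^j-1}$ come from (the division of $x^{2^{j+1}(\sqrt{2q}-1)t}+x^{2^j(\sqrt{2q}-2)t}$-type terms by $1+x^{\sqrt{2q}t}$), something your backward substitution would have to rediscover. If you want to salvage your route, you would need to carry out the $(\lambda,u)$-reduction explicitly; otherwise the forward derivation is the more tractable path.
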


\begin{proof}
Mimicking the proof of Theorem \ref{polynomialT0}, one obtains
$$
x^{q^2+tq+t+1}+x^{q^2+t+1}+x^t+1=0
$$
which implies
\begin{equation} \label{t1:0}
x^{2qt}+x^{qt}+x^{(\sqrt{2q}-1)t}+x^{(\sqrt{2q}-2)t}=0.
\end{equation}
Consecutively squaring this equation and summing, we get
\begin{equation} \label{t1:1}
x^{\sqrt{2q}qt}+x^{qt}+\sum_{j=0}^{\log \sqrt{q/2}} (x^{2^j(\sqrt{2q}-1)t}+x^{2^j(\sqrt{2q}-2)t})=0.
\end{equation}

This is the case that $\tau(x)=x^{q^2\sqrt{2/q}}$. Thus, $\tau(x) \in \ell_x$ implies $\tau(x)^{\sqrt{2q}} \in \ell_{x^{\sqrt{2q}}}$, which, multiplying by $x^{2q+2}$, implies
\begin{equation} \label{t1:2}
x^{\sqrt{2q}qt+2t}+x^{(2q-\sqrt{2q}+2)t}+x^{2t}+1=0.
\end{equation}
Summing $x^{(-\sqrt{2q}t+2t)}$ times (\ref{t1:0}) gives
\begin{equation} \label{t1:3}
x^{\sqrt{2q}qt+2t}+x^{(q-\sqrt{2q}+2)t}+x^{2t}+x^{t}=0.
\end{equation}
Now, $x^t(\ref{t1:1})+x^{-t}(\ref{t1:3})$ gives
$$
x^{(q-\sqrt{2q}+1)t}+x^{t}+1+x^{(q+1)t}+x^t\sum_{j=0}^{\log \sqrt{q/2}} (x^{2^j(\sqrt{2q}-1)t}+x^{2^j(\sqrt{2q}-2)t})=0.
$$
which we can rewrite as
$$
x^{q^2+1}(1+x^{\sqrt{2q}t})+x^{t}+1+x^{\sqrt{2q}t}+x^{(q-\sqrt{2q}+1)t}+x^t\sum_{j=0}^{\log \sqrt{q/2}-1} (x^{2^{j+1}(\sqrt{2q}-1)t}+x^{2^j(\sqrt{2q}-2)t})=0
$$
and dividing by $1+x^{\sqrt{2q}t}$ gives
$$
x^{q^2+1}+1+x^t \left(\frac{1+x^{(q-\sqrt{2q})t}}{1+x^{\sqrt{2q}t}}\right)+x^t\sum_{j=0}^{\log \sqrt{q/2}-1} x^{2^j(\sqrt{2q}-2)t}\left( \frac{x^{2^{j}(\sqrt{2q})t}+1}{1+x^{\sqrt{2q}t}}\right)=0. \qedhere
$$
\end{proof}

\begin{example} We describe some examples obtained from GAP \cite{GAP4}. For $q=8$ we have 
\[
Q_1(x)=x^{65}+x^{39}+x^{13}+1,
\]
and for $q=32$ we have 
\[
Q_1(x)=x^{1025}+x^{861}+x^{697}+x^{533}+x^{369}+x^{287}+x^{41}+1.
\]
\end{example}








{\bf Acknowledgments}
	
\medskip
We are indebted to Tao Feng for  bringing the work of Glauberman to our attention. Kanat Abdukhalikov was supported by UAEU grant G00004614. Duy Ho was supported by   the Tromsø Research Foundation (project “Pure Mathematics in
Norway”), and  UiT Aurora project MASCOT.
Simeon Ball and Tabriz Popatia were supported by the Spanish Ministry of Science, Innovation and Universities grants PID2020-113082GB-I00 and PID2023-147202NB-I00.

\end{document}